\newcommand \be     {\begin{equation}}
\newcommand \ee     {\end{equation}}
\newcommand \RR      {\mathbb{R}}
\newcommand \NN      {\mathbb{N}}
\newcommand \del     \partial
\newcommand{\clg}[1]{{\mathcal{#1}}}
\newcommand{\td}[1]{{\tilde{#1}}}
\def\XXint#1#2#3{{\setbox0=\hbox{$#1{#2#3}{\int}$}
\vcenter{\hbox{$#2#3$}}\kern-.5\wd0}}
 \newcommand{\R}{\mathbb R}
 \newcommand{\Z}{\mathbb Z}
 \newcommand{\C}{\mathbb C}
 \newcommand{\p}{\partial}
\newtheorem{theorem}{Theorem}[section]
 \newtheorem{remark}[theorem]{Remark}
\newtheorem{lemma}[theorem]{Lemma}
\newtheorem{definition}[theorem]{Definition}
\newtheorem{example}[theorem]{Example}
 \def\beqs{\begin{eqnarray*}}
 \def\enqs{\end{eqnarray*}}
 \def\beq{\begin{eqnarray}}
 \def\enq{\end{eqnarray}}
\begin{document}
\title{Persistence
property in weighted Sobolev spaces for nonlinear dispersive
equations }

\author{X. Carvajal$^1$, W. Neves$^1$}

\date{}

\maketitle

\footnotetext[1]{ Instituto de Matem\'atica, Universidade Federal
do Rio de Janeiro, C.P. 68530, Cidade Universit\'aria 21945-970,
Rio de Janeiro, Brazil. E-mail: {\sl carvajal@im.ufrj.br,
wladimir@im.ufrj.br.}

\textit{Key words and phrases. Generalized Korteweg-de Vries
equation, global well-posed.}}

%

%
\begin{abstract}
We generalize the Abstract Interpolation Lemma proved by the
authors in \cite{CN1}. Using this extension, we show in a more
general context, the persistence property for the generalized
Korteweg-de Vries equation, see \eqref{IVP}, in the weighted
Sobolev space with low regularity in the weight. The method used
can be applied for other nonlinear dispersive models, for instance
the multidimensional nonlinear Schrödinger equation.
\end{abstract}
%

\maketitle

\section{Introduction} \label{IN}

We are mainly concerned with the question of the persistence
property in weighted Sobolev spaces for dispersive partial
differential equations. Thus, the aim of this study is to
generalize the Abstract Interpolation Lemma proved by the authors
in \cite{CN1}, and to apply this new result to show, in a more
general context, the persistence property of the initial-value
problem for nonlinear dispersive equations. To be more precise,
let us recall the persistence result we established in \cite{CN1}
for the Cauchy Problem for higher order nonlinear Schrödinger
equation, that is
\begin{equation}\label{IVPHOSEQ}
\begin{cases}
\p_t u + i \, a\,\p^2_xu + b\,\p^3_x u + i \,c \,|u|^{2}u + d\,
|u|^2 \p_x u+e\, u^2\p_x\bar{u}=  0, \quad (t,x) \in \R^2,\\
u(x,0)  =  u_0(x),
\end{cases}
\end{equation}
where $u$ is a complex valued function, $a, b, c, d$ and $e$ are
real parameters and $u_0$ is a given initial data. And, the main
theorem on that paper:
\begin{theorem}\label{t1.2}
The IVP (\ref{IVPHOSEQ}) is globally well-posed in
$\mathcal{X}^{2,\theta}$ for any  $0\le \theta \le 1$ fixed.
Moreover, the solution $u$ of (\ref{IVP}) satisfies, for each $t
\in [-T,T]$
\begin{equation*}
  \|u(t)\|_{L^2(d\dot{\mu}_\theta)}^2  \leq C \; \Big(\|u_0\|_{L^2}^2
  + \|u_0\|_{L^2(d\dot{\mu}_\theta)}^2 \, + 1 \Big),
\end{equation*}
where $C=  C
(\theta,\|u(t)\|_{H^s},\|u(0)\|_{L^2},\|u_x(0)\|_{L^2},\|u_{xx}(0)\|_{L^2},T),
s> 1/2$.
\end{theorem}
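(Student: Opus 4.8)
The plan is to treat the global $H^s$ theory as an input and to reduce the weighted bound to an a priori differential inequality for the homogeneous weighted norm. Since the constant $C$ in the statement is allowed to depend on $\sup_{[-T,T]}\|u(t)\|_{H^s}$ ($s>1/2$) and on the $L^2,H^1,H^2$ norms of the data, I would first invoke the global well-posedness of \eqref{IVPHOSEQ} in $H^2$, so that all Sobolev norms of $u(t)$ appearing below are already under control on $[-T,T]$. It then suffices to show that $t\mapsto\|u(t)\|^2_{L^2(d\dot\mu_\theta)}$ cannot grow faster than exponentially. To that end, writing $w_\theta$ for the (homogeneous) weight defining $d\dot\mu_\theta$, I would test \eqref{IVPHOSEQ} against $w_\theta\,\bar u$ and take real parts, obtaining
\[
\tfrac12\frac{d}{dt}\int w_\theta\,|u|^2\,dx=\mathrm{Re}\int w_\theta\,\bar u\,\p_t u\,dx,
\]
and then substitute $\p_t u=-ia\,\p_x^2u-b\,\p_x^3u-ic\,|u|^2u-d\,|u|^2\p_x u-e\,u^2\p_x\bar u$.

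Next I would dispose of the algebraic and dispersive contributions one by one. The gauge term $-ic\,|u|^2u$ drops out at once, since $\mathrm{Re}(-ic\,w_\theta|u|^4)=0$. For the dispersive terms I would integrate by parts, transferring derivatives from $u$ onto the weight: the leading pieces are real integrals, which vanish under $\mathrm{Re}$ against the imaginary coefficient $-ia$ of $\p_x^2$ and telescope for the skew-adjoint operator $-b\,\p_x^3$, and what remains are commutator terms of the schematic form $\int(\p_x^k w_\theta)\,\bar u\,\p_x^{\,3-k}u\,dx$ with $k\ge1$. The genuinely nonlinear terms $d\,|u|^2\p_x u$ and $e\,u^2\p_x\bar u$ are controlled, after using $\|u\|_{L^\infty}\lesssim\|u\|_{H^s}$, by Sobolev norms of $u$ times the weighted quantities already present.

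The crucial step is closing the estimate for non-integer $\theta$. For $\theta\in\{0,1\}$ the commutator terms are polynomial in the weight and can be handled directly by Leibniz and integration by parts; but for fractional $\theta$ the differentiated weight $\p_x w_\theta$ is only a fractional power, and the surviving term from the third-order dispersion contains a weighted first derivative $\|w_\theta^{1/2}\p_x u\|_{L^2}$ that is not itself one of the controlled norms. This is exactly where I would invoke the generalized Abstract Interpolation Lemma announced in the abstract (the extension of \cite{CN1}): it allows me to bound this intermediate weighted quantity, together with the lower commutator pieces, by a product of a power of the target quantity $\|u\|_{L^2(d\dot\mu_\theta)}$ and a power of the controlled norm $\|u\|_{H^s}$. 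Feeding these bounds back produces a differential inequality of the form
\[
\frac{d}{dt}\,\|u(t)\|^2_{L^2(d\dot\mu_\theta)}\le C(t)\Big(\|u(t)\|^2_{L^2(d\dot\mu_\theta)}+\|u(t)\|_{L^2}^2+1\Big),
\]
with $C(t)$ depending only on the already-controlled Sobolev norms; Gronwall's inequality on $[-T,T]$ then yields the asserted bound.

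The main obstacle is precisely this closure of the third-order commutator for fractional $\theta$. The skew-adjoint dispersion $b\,\p_x^3$ forces a weighted first-derivative term that is genuinely intermediate between the quantity being estimated and a higher Sobolev norm, and no naive product rule controls it when $\theta$ is not an integer; one must also reconcile the singularity of the homogeneous weight at the origin with its growth at infinity while keeping the right-hand side linear in the target quantity, so that Gronwall applies. Carrying this out is the technical heart of the argument, and is the reason the interpolation lemma of \cite{CN1} has to be generalized.
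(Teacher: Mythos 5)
Your overall architecture is not the one this result rests on, and as written it has concrete gaps. The proof the paper (and \cite{CN1}) gives contains no differential inequality and no Gronwall argument at fractional $\theta$: one first proves the a priori bounds \eqref{ap-1} and \eqref{ap-4} at the \emph{integer} (top) weight exponent, where the weight is polynomial and the energy identity is classical (Theorem \ref{teorlinearpers}, Kato's argument); one then checks that the family of approximate solutions satisfies the structural conditions $(C1)$--$(C4)$ (Example \ref{EXAMPLETWO}); and Lemma \ref{G1CL} then \emph{directly} outputs the displayed inequality for every intermediate $\theta$, the factor $\|u(t)\|_{H^s}^{2\rho}$ being absorbed into $C$. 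The fractional-weight energy estimate you propose to perform is precisely what this scheme is designed to avoid.

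As for the gaps: first, you assign to the Abstract Interpolation Lemma a job it does not do. Lemma \ref{G1CL} bounds $\|f(t)\|^2_{L^2(d\dot\mu_\theta)}$ in terms of $\|f(0)\|_{L^2}$, $\|f(0)\|_{L^2(d\dot\mu_\theta)}$ and $\|f(t)\|_{H^s}$; it says nothing about weighted norms of $\partial_x u$ at time $t$. The commutator term $\int(\partial_x w_\theta)\,|\partial_x u|^2\,dx$ produced by $b\,\partial_x^3$ must instead be handled by the weighted Gagliardo--Nirenberg inequality of Lemma \ref{opera-l}, and there you must verify that the exponent of the target quantity $Y(t)=\|u(t)\|^2_{L^2(d\dot\mu_\theta)}$ which comes out is at most one, so that the resulting inequality is linear in $Y$ and Gronwall closes; you assert this without checking it, and the verification requires a case distinction in $\theta$ (for $\theta>1/2$ one gets the exponent $\sigma=1-1/(2\theta)$, while for $\theta\le 1/2$ the weight derivative is bounded at infinity but singular at the origin). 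Second, the homogeneous weight is not admissible in your computation: $\partial_x^3|x|^{2\theta}\sim|x|^{2\theta-3}$ is not locally integrable against $|u|^2$ near the origin for any $\theta\le 1$, so the term $\int(\partial_x^3 w_\theta)\,|u|^2\,dx$ in your commutator expansion diverges; you must replace $|x|^{2\theta}$ by $(1+|x|^2)^{\theta}$ (harmless, since the two norms differ by $\|u\|_{L^2}$) or by a truncated weight. Relatedly, pairing the equation with $w_\theta\bar u$ and integrating by parts presupposes that $u(t)$ already has the decay being proved; a truncation-and-limit step, or the paper's approximation by Schwartz data with constants uniform in the approximation parameter, is needed and is absent from your sketch.
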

The notion of well-posedness for dispersive equations is given
below, and the particular notations used throughout this paper are
given in Section \ref{Notation}. Therefore, one of the main issues
of this article is to extend the persistence property proved
before for $\theta \in [0,1]$ to more general values of the
exponent $\theta$. In particular, we explore our strategy on the
generalized KdV equation, see \eqref{IVP} below, i.e., we consider
the 1-dimensional case. However, the extension of the Abstract
Interpolation Lemma proved in this paper to show the persistence
property for more general exponents $\theta$, also allows us to
demonstrate the persistence property for multi-dimensional
equations as presented in this paper.

\bigskip
Consider the initial value problem (IVP)
\begin{equation}\label{IVP}
\begin{cases}
\p_t u +  a(u) \partial_xu + {\partial_x}^{\!\!\!3} u= 0, \quad (t,x) \in \R^2,\\
u(0,x)  =  u_0(x),
\end{cases}
\end{equation}
where $u$ is the real valued function we are seeking, $u_0$ is the
initial-data given in some convenient space, and $a(u)$ is a given
$C^\infty$ (weaker differentiability is sufficient for most
results) real value function. Moreover, we may assume that $a(u)$
satisfies, as in Kato \cite{KT1}, the following condition
\begin{equation}
\label{AC}
  \begin{aligned}
  \limsup_{|\lambda| \to \infty} \frac{2}{|\lambda|^6}
  \int_0^\lambda (\lambda - s) \; a(s) \; ds \leq 0.
  \end{aligned}
\end{equation}

Now, we introduce the typical notion of well-posedness that we are
going to use throughout this paper. First, we consider the
integral equation associated with \eqref{IVP}
\begin{equation}
\label{integreq}
   u(t)= U(t)\, u_0 + \int_0^t U(t-\tau) \;
   a(u(\tau)) \; \partial_x u(\tau) \, d\tau,
\end{equation}
where $U(t)$ is the unitary group, solution of the linear KdV
equation. It is not difficult to show that, if $u$ is a solution
for the Cauchy Problem \eqref{IVP}, then it satisfies
\eqref{integreq}. Then, we have the following
\begin{definition}
\label{WELLPD} Let $X$, $Y$ be two Banach spaces, such that $X$ is
continuously embedded in $Y$. Suppose that, for each $u_0 \in X$,
there exists $T>0$, and a unique function
\begin{equation}
\label{UNIQSOL}
  u \in C([0,T]; X)
\end{equation}
satisfying \eqref{integreq} for all $t \in [0,T]$, and also
$\partial_t u \in C((0,T]; Y)$. The Cauchy Problem \eqref{IVP} is
said to be locally well-posed in $X$, when the map $u_0 \mapsto u$
is continuous from $X$ to $C([0,T]; X)$. If $T$ can be taken
arbitrary large, then \eqref{IVP} is said globally rather than
locally well-posed in $X$. Moreover, \eqref{UNIQSOL} implies the
persistence property of the initial data.
\end{definition}

\medskip
If we consider the initial data in Sobolev spaces with sufficient
regularity, for example in $H^s(\RR)$, $s\geq 2$, it is not
difficult to prove the unique existence of the solution of the IVP
\eqref{IVP} in the weighted Sobolev spaces. However, proving the
persistence property, also continuous dependence, is not so easy
and it is quite involved when we are working in weighted Sobolev
spaces. Our main focus in this paper is to show the persistence
property, with respect to more general exponents, as explained
below. To accomplish this, in the present paper we establish an
extension of the abstract interpolation lemma proved in
\cite{CN1}.

\medskip In fact, the interpolation extension proved here is quite
general and applies to several dispersive equations provided they
satisfy certain {\em a priori} estimates. These {\em a priori}
estimates are related to the conserved quantities and are as
follows:

\begin{equation}\label{ap-1}
\|u(t)\|_{L^2} \le C \|u_0\|_{L^2},
\end{equation}
%
%
\begin{equation}\label{ap-3}
\|u(t)\|_{\dot{H}^{a(r)}} \le A_1(\|u_0\|_{{H}^{{a(r)}}}),
\end{equation}
and
\begin{equation}\label{ap-4}
\|u(t)\|_{L^2(d\dot{\mu}_r)} \le C \|u_0\|_{L^2(d\dot{\mu}_r)}+
A_2(\|u_0\|_{{H}^{{a(r)}}}),
\end{equation}
where $a(r) \ge 1, r \in \mathbb{Z}^+$, $A_j$ are nonnegative
continuous functions with $A_1(0)=0$, $A_2(0)=0$. Here, we
consider that the IVP \eqref{IVP} satisfies
\eqref{ap-1}--\eqref{ap-4}, for that we address the reader to Kato
\cite{KT1} as we are going to precise below. A typical equation
that satisfies the properties \eqref{ap-1}--\eqref{ap-4} listed
above is the IVP associated to the generalized Korteweg-de Vries
(gKdV) equation,
\begin{equation}\label{IVP-1}
\begin{cases}
\p_t u + u^k\p_xu + \p_{x}^3u= 0, \quad (t,x) \in \R^2, k = 1, 2, 3, \cdots \\
u(x,0)  =  u_0(x).
\end{cases}
\end{equation}

\medskip
Before stating the main result of this work, we discuss some
similar results, previously obtained in the same direction of the
main issue of this paper. The IVP associated to the Nonlinear
Schr\"odinger (NLS) equation
\begin{equation}\label{nls-1}
\begin{cases}
 i\partial_tu +\Delta u = \mu |u|^{\alpha-1}u, \qquad \mu = \pm 1, \; \alpha >1,\, x \in \R^n, \, t\in \R\\
 u(x,0) = u_0(x),
 \end{cases}
 \end{equation}
 has been studied in \cite{HNT} for given
 data in the weighted Sobolev spaces.
 More precisely, the following theorem that
 deals with the persistence property has been proved in
 \cite{HNT}:
 \begin{theorem}\label{th.1}
 Suppose that $u_0\in H^s(\R^n)\cap L^2(|x|^{2m}dx)$, $m\in \Z^+$, with $m\leq \alpha-1$ if $\alpha$ is not an odd integer.

\noindent
A. If $s\geq m$, then there exist $T=T(\|u_0\|_{s,2})>0$ and a unique solution $u=u(x,t)$ of the IVP \eqref{nls-1} with
\begin{equation}\label{eq-m1}
u\in C([-T, T]; H^s\cap L^2(|x|^{2m}dx))\cap L^q([-T, T];L_s^p\cap L^p(|x|^{2m}dx)).
\end{equation}

\noindent
B. If $1\leq s <m$, then \eqref{eq-m1} holds with $[s]$ instead of $m$, and
\begin{equation}\label{eq-m2}
\Gamma^{\beta}u=(x_j+2it\partial_{x_j})^{\beta}u\in C([-T, T]; L^2)\cap L^q([-T, T]; L^p),
\end{equation}
for any $\beta \in (\Z^+)^n$ with $|\beta|\leq m$.
\end{theorem}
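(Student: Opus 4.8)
The plan is to combine two classical ingredients: the Strichartz estimates for the free Schr\"odinger group $U(t)=e^{it\Delta}$, and the fact that the first-order operators $\Gamma_j=x_j+2it\partial_{x_j}$ commute with the linear evolution, that is $[\,i\partial_t+\Delta,\,\Gamma_j\,]=0$, equivalently $\Gamma_j\,U(t)=U(t)\,x_j$ and, along Duhamel, $\Gamma_j(t)\,U(t-\tau)=U(t-\tau)\,\Gamma_j(\tau)$. First I would establish local existence and uniqueness in $H^s(\R^n)$ alone by a standard contraction-mapping argument applied to the integral equation
\begin{equation*}
u(t)=U(t)u_0-i\mu\int_0^t U(t-\tau)\,|u(\tau)|^{\alpha-1}u(\tau)\,d\tau,
\end{equation*}
in a mixed-norm Strichartz space $C([-T,T];H^s)\cap L^q([-T,T];L^p_s)$ with $(q,p)$ an admissible pair; this fixes the local time $T=T(\|u_0\|_{s,2})$ and produces the unique solution claimed in the theorem.

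For the weighted persistence in Part A (here $s\ge m$, with $m$ a positive integer), I would propagate the weight by applying the operators $\Gamma^\beta=\Gamma_1^{\beta_1}\cdots\Gamma_n^{\beta_n}$, $|\beta|\le m$, to the Duhamel formula. By the commutation relation the linear term becomes $U(t)(x^\beta u_0)$, whose Strichartz norm is controlled by $\|x^\beta u_0\|_{L^2}\lesssim\|u_0\|_{L^2(|x|^{2m}dx)}$, while the Duhamel term carries $\Gamma^\beta$ onto the nonlinearity. The core estimate is a Leibniz rule for $\Gamma$ combined with the chain rule for $F(u)=|u|^{\alpha-1}u$, expressing $\Gamma^\beta F(u)$ as a sum of terms of the form (derivatives of $F$) times products of factors $\Gamma^{\beta'}u$ with $\sum|\beta'|\le|\beta|$. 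Estimating these in the dual admissible space $L^{q'}([-T,T];L^{p'})$ via H\"older and Sobolev embedding, then closing the resulting inequality by a Gronwall argument on $[-T,T]$, yields $\Gamma^\beta u\in C([-T,T];L^2)\cap L^q([-T,T];L^p)$ for all $|\beta|\le m$. Since $s\ge m$, one finally recovers $u\in L^2(|x|^{2m}dx)$ by writing $x^\beta$ as $\Gamma^\beta$ plus lower-order terms involving $t\,\partial_x$, and using the available $H^s$ regularity to control the latter.

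Part B ($1\le s<m$) is proved along the same lines, but now the Sobolev regularity is insufficient to trade all of the $\Gamma^\beta$ back into a pure spatial weight: one recovers the weight only up to order $[s]$, while for $[s]<|\beta|\le m$ one keeps the conclusion in the twisted form $\Gamma^\beta u\in C([-T,T];L^2)\cap L^q([-T,T];L^p)$, which is exactly what the commutation structure controls. The main obstacle throughout is the limited smoothness of the nonlinearity: when $\alpha$ is not an odd integer, $|u|^{\alpha-1}u$ is only finitely differentiable at the origin, so the Leibniz/chain-rule expansion of $\Gamma^\beta F(u)$ is valid only up to order $\alpha-1$, which is precisely why the hypothesis $m\le\alpha-1$ is imposed in that case. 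Making the fractional chain rule rigorous and verifying that every term produced lands in the dual admissible space, so that the a priori estimate genuinely closes, is the delicate point of the whole argument.
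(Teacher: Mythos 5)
This theorem is not proved in the paper at all: it is quoted verbatim from Hayashi--Nakamitsu--Tsutsumi \cite{HNT} as background motivating the authors' own results, so there is no internal proof to compare against. Your sketch is, in essence, the classical vector-field argument of that reference, and the strategy is sound: local theory by contraction in $C([-T,T];H^s)\cap L^q L^p_s$ via Strichartz, then propagation of the weight through the commuting operators $\Gamma_j=x_j+2it\partial_{x_j}$, using $\Gamma_j=e^{i|x|^2/4t}(2it\partial_{x_j})e^{-i|x|^2/4t}$ so that the Leibniz/chain rule applies to $\Gamma^\beta\big(|u|^{\alpha-1}u\big)$, which is exactly where the hypothesis $m\le\alpha-1$ enters when $\alpha$ is not an odd integer; the split between Parts A and B according to whether $H^s$ regularity suffices to trade $\Gamma^\beta$ back for $x^\beta$ is also correctly identified. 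Two remarks. First, the step ``write $x^\beta$ as $\Gamma^\beta$ plus lower-order terms'' glosses over the fact that $x_j=\Gamma_j-2it\partial_{x_j}$ with $[\partial_{x_k},x_j]=\delta_{jk}$, so expanding $x^\beta$ produces mixed terms $\Gamma^{\beta'}\partial^{\gamma}u$ rather than pure products; the a priori estimate must therefore be closed simultaneously for the whole family $\{\Gamma^{\beta'}\partial^{\gamma}u\}$ (or one argues that $\Gamma^\beta u$ solves a conjugated equation), not for $\Gamma^\beta u$ and $D^su$ separately. Second, it is worth noting that your route is intrinsically tied to $m$ being a positive integer (so that $\Gamma^\beta$ is an iterated first-order operator), which is precisely the restriction the present paper is built to remove: its Abstract Interpolation Lemma (Lemma \ref{G1CL}), together with the a priori bounds \eqref{ap-1}--\eqref{ap-4} at the integer endpoints, yields the weighted persistence for all real $\theta\in[0,r]$ without any fractional Leibniz rule for $\Gamma$. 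The two approaches are complementary: yours gives the sharper space-time (Strichartz) information \eqref{eq-m1}--\eqref{eq-m2} for integer weights, while the paper's interpolation scheme buys generality in the exponent of the weight and in the dispersive model.
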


 The power $m$ of the weight in Theorem \ref{th.1} is
 assumed to be a positive integer. In the recent
 study of Nahas and Ponce \cite{NP}, this restriction in $m$ is
 relaxed by proving that the persistence property holds for positive
real $m$. To be more precise, the result in \cite{NP} is the
following
 \begin{theorem}\label{th.2}
 Suppose that $u_0\in H^s(\R^n)\cap L^2(|x|^{2m}dx)$, $m>0$, with $m\leq \alpha-1$ if $\alpha$ is not an odd integer.

\noindent
A. If $s\geq m$, then there exist $T=T(\|u_0\|_{s,2})>0$ and a unique solution $u=u(x,t)$ of the IVP \eqref{nls-1} with
\begin{equation}\label{eq-m3}
u\in C([-T, T]; H^s\cap L^2(|x|^{2m}dx))\cap L^q([-T, T];L_s^p\cap L^p(|x|^{2m}dx)).
\end{equation}

\noindent
B. If $1\leq s <m$, then \eqref{eq-m3} holds with $[s]$ instead of $m$, and
\begin{equation}\label{eq-m4}
\Gamma^b\Gamma^{\beta}u\in C([-T, T]; L^2)\cap L^q([-T, T]; L^p),
\end{equation}
where $\Gamma^b = e^{i|x|^2/4t}2^bt^bD^b(e^{i|x|^2/4t}.)$ with $|\beta|=[m]$ and $b=m-[m]$.
 \end{theorem}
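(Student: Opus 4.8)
The plan is to combine the Strichartz-based local theory for \eqref{nls-1} in $H^s(\R^n)$ with a propagation-of-weight argument driven by the Galilean vector fields $\Gamma_j=x_j+2it\partial_{x_j}$. Local well-posedness in $H^s$ alone (Kato, Cazenave--Weissler) already yields $u\in C([-T,T];H^s)\cap L^q([-T,T];L^p_s)$ for an admissible pair $(q,p)$, so the entire task is to carry along the weight $|x|^{2m}$. The structural fact I would exploit is that each $\Gamma_j$ commutes with the free Schr\"odinger group $U(t)=e^{it\Delta}$, i.e. $[\,i\partial_t+\Delta,\ \Gamma_j\,]=0$, so $\Gamma_jU(t)=U(t)\,x_j$; equivalently $\Gamma_j=M(t)\,(2it\partial_{x_j})\,M(t)^{-1}$ with $M(t)$ multiplication by $e^{i|x|^2/4t}$. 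Up to terms already bounded by the $H^s$ norm, controlling $|x|^{[m]}u$ in $L^2$ (and in $L^q_tL^p_x$) is therefore equivalent to controlling $\Gamma^\beta u$ with $|\beta|=[m]$.

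First I would handle the integer part. Applying $\Gamma^\beta$, $|\beta|=[m]$, to the Duhamel formula $u(t)=U(t)u_0-i\mu\int_0^tU(t-\tau)\,(|u|^{\alpha-1}u)(\tau)\,d\tau$ and using the commutation, the linear term becomes $U(t)(x^\beta u_0)$, bounded by $\||x|^{[m]}u_0\|_{L^2}$, while the Duhamel term calls for a Leibniz rule for $\Gamma^\beta$ acting on $\mu|u|^{\alpha-1}u$. When $\alpha$ is an odd integer the nonlinearity is a polynomial in $u,\bar u$ and the expansion is exact; otherwise one distributes at most $[m]\le\alpha-1$ copies of $\Gamma$ over the $\alpha$ factors, always leaving enough low-order factors to be absorbed by the Strichartz norms. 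This reproduces Theorem \ref{th.1}: it gives (A) for integer $m$, and in (B) it yields the integer-weight membership \eqref{eq-m3} (with $[s]$) together with $\Gamma^\beta u\in C([-T,T];L^2)\cap L^q([-T,T];L^p)$.

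The genuinely new step over Theorem \ref{th.1} is the fractional remainder $b=m-[m]\in(0,1)$. Here I would introduce the conjugated fractional derivative $\Gamma^b=M(t)\,2^bt^bD^b\,M(t)^{-1}$, which plays for $|x|^b$ the role $\Gamma_j$ plays for $x_j$: since $\mathcal{F}\big(e^{i|x|^2/4t}f\big)$ turns the multiplier $|x|^b$ into the homogeneous derivative $D^b$, propagating the fractional part of the weight reduces to estimating $\Gamma^b\Gamma^\beta u$ in $L^2$ and $L^q_tL^p_x$. Applying $\Gamma^b$ to Duhamel, the linear contribution is controlled by $\||x|^{m}u_0\|_{L^2}$, and the nonlinear one requires (i) a Kato--Ponce-type fractional commutator bound $\|D^b(fg)-fD^bg-gD^bf\|_{L^p}\lesssim\|g\|_{L^{p_1}}\|D^bf\|_{L^{p_2}}$ and (ii) a fractional chain rule $\|D^bF(u)\|_{L^p}\lesssim\|F'(u)\|_{L^{p_1}}\|D^bu\|_{L^{p_2}}$, both most transparently proved through the pointwise Stein characterization $D^bf(x)\sim\big(\int|f(x)-f(y)|^2\,|x-y|^{-n-2b}\,dy\big)^{1/2}$.

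The main obstacle is precisely (ii): when $\alpha$ is not an odd integer the map $z\mapsto|z|^{\alpha-1}z$ is only finitely differentiable, so one cannot differentiate the nonlinearity freely, and the fractional chain rule must be run within a derivative budget that is exactly $\alpha-1$; this is where the hypothesis $m\le\alpha-1$ enters decisively, since after spending $[m]$ Galilean derivatives one is left with $b\le\alpha-1-[m]$ worth of fractional smoothness to allocate. I would then close the argument by assembling the $H^s$, integer-weight, and fractional-weight quantities into a single norm and showing that the Duhamel map is a contraction on a small ball of $C([-T,T];H^s\cap L^2(|x|^{2m}dx))\cap L^q([-T,T];L^p_s\cap L^p(|x|^{2m}dx))$ for $T=T(\|u_0\|_{s,2})$; uniqueness and continuous dependence follow from the same fixed-point estimates, and the time-dependent memberships \eqref{eq-m3}--\eqref{eq-m4} are read off directly from the construction.
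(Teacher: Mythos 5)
This theorem is not proved in the paper at all: it is quoted verbatim from Nahas and Ponce \cite{NP} as background motivation (just as Theorem \ref{th.1} is quoted from \cite{HNT}), so there is no in-paper argument to compare yours against. What can be said is that your sketch is a faithful reconstruction of the strategy actually used in \cite{NP}: local theory in $H^s$ via Strichartz, propagation of the integer part of the weight through the Galilean operators $\Gamma_j=x_j+2it\partial_{x_j}$ applied to Duhamel's formula, the conjugation identity $\Gamma^b=e^{i|x|^2/4t}(2t)^bD^b(e^{-i|x|^2/4t}\,\cdot\,)$ for the fractional remainder $b=m-[m]$, fractional Leibniz and chain rules run within the derivative budget $m\le\alpha-1$, and a contraction in the combined norm. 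You correctly identify the two genuine difficulties (the limited smoothness of $z\mapsto|z|^{\alpha-1}z$ and the role of the hypothesis $m\le\alpha-1$).

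The one place your outline is thin is the passage between the weighted norms $\||x|^{m}u(t)\|_{L^2}$ and the $\Gamma$-norms for $t\neq0$. Writing $x^\beta=\Gamma^\beta-(\text{terms involving }t^{|\gamma|}\partial^\gamma)$ costs up to $[m]$ (respectively $m$, after the fractional piece) derivatives, so converting control of $\Gamma^b\Gamma^\beta u$ back into membership in $L^2(|x|^{2m}dx)$ requires $s\geq m$; this is precisely where the dichotomy between parts A and B originates, and why in part B one only recovers the weight $|x|^{2[s]}$ while the remaining decay information survives only in the form \eqref{eq-m4}. Your sketch waves at this with ``up to terms already bounded by the $H^s$ norm,'' but that absorption is exactly the step that fails when $s<m$, and it deserves to be made explicit. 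Also note a small point of bookkeeping: in the fractional chain rule for $|u|^{\alpha-1}u$ the budget constraint is on the total $m=[m]+b$, not merely on $b\le\alpha-1-[m]$ after distributing the integer derivatives, since the Galilean derivatives do not literally fall on the nonlinearity as ordinary derivatives but act through the conjugated representation; in \cite{NP} this is handled by working throughout with $D^b$ applied to $e^{-i|x|^2/4t}u$ and using the pointwise (Stein) characterization of $D^b$, as you indicate.
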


In the next section, see the IVP \eqref{IVPgeral}, Theorem
\ref{teorlinearpers} and Remark \ref{RCF}, we establish the
conditions to apply our technique, and hence we obtain similar
results for the above NLS equation.

\medskip
Now, we recall that Kato \cite{KT1} studied the IVP \eqref{IVP}
for a given initial data in the weighted Sobolev spaces and proved
the following result.
\begin{theorem}\label{kato}
Let $r$ be a positive integer. Then, the IVP \eqref{IVP} is
locally well-posed in weighted Sobolev spaces
$\mathcal{X}^{2r,r}$, and globally well-posed in
$\mathcal{X}^{2r,r}$ if the initial data satisfies $\|u_0\|_{L^2}<
\gamma$, for some positive $\gamma$.
\end{theorem}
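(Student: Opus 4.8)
\emph{Proof sketch.} Write $\mathcal{X}^{2r,r}=H^{2r}(\R)\cap L^2(|x|^{2r}\,dx)$, so that the theorem amounts to propagating, along the flow of \eqref{IVP}, both the high Sobolev regularity and the polynomial weight. The plan is to separate these two tasks: establish local existence in $H^{2r}$, then propagate the weight, and finally globalize. The guiding structural fact is that the linear group $U(t)$ conjugates multiplication by $x$ into a differential operator: a direct Fourier computation gives $x\,U(t)=U(t)\,(x+3t\,\partial_x^2)$, and iterating, $x^rU(t)=U(t)\,(x+3t\,\partial_x^2)^r$. Expanding the right-hand side produces terms ranging from $|x|^r$ down to $3^rt^r\partial_x^{2r}$, which is exactly why the weight exponent $r$ must be matched with the Sobolev exponent $2r$: controlling $|x|^ru$ forces control of $\partial_x^{2r}u$ together with all intermediate mixed quantities.

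First I would establish local well-posedness in $H^{2r}$. Since $2r\ge 2>3/2$ for every positive integer $r$, this is the classical quasilinear regime: one regularizes \eqref{IVP} by parabolic regularization, derives the $H^{2r}$ energy estimate $\tfrac{d}{dt}\|u\|_{H^{2r}}^2\le C(\|u\|_{H^{2r}})$ after commuting $\partial_x^{2r}$ through the nonlinear transport term $a(u)\partial_x u$ and using the skew-adjointness of $\partial_x^3$, and passes to the limit to obtain $u\in C([0,T];H^{2r})$ solving \eqref{integreq}. Uniqueness and continuous dependence then follow from the analogous $L^2$ energy estimate applied to the difference of two solutions.

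The heart of the argument is propagating the weight. Working on the regularized solutions and passing to the limit, I would estimate $\tfrac{d}{dt}\||x|^ru\|_{L^2}^2=2\int|x|^{2r}u\,\partial_t u\,dx$, substitute $\partial_t u=-a(u)\partial_x u-\partial_x^3 u$, and integrate by parts. The transport contribution is harmless: writing $u\,a(u)\,\partial_x u=\partial_x F(u)$ with $F'(s)=s\,a(s)$ and moving $\partial_x$ onto $|x|^{2r}$ lowers the weight. The dispersive contribution $-2\int|x|^{2r}u\,\partial_x^3 u\,dx$, after repeated integration by parts, reduces to weighted quantities whose decisive member is $r\int|x|^{2r-1}(\partial_x u)^2\,dx=r\,\big\||x|^{\,r-1/2}\partial_x u\big\|_{L^2}^2$, together with strictly lower-order terms. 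This quantity is dominated by neither $\|u\|_{H^{2r}}$ (pure derivative, no weight) nor $\||x|^ru\|_{L^2}$ (pure weight, no derivative) alone; however, in the derivative/weight plane its indices $(1,r-\tfrac12)$ lie exactly on the segment joining $(2r,0)$ and $(0,r)$. The extended Abstract Interpolation Lemma therefore bounds it by $\|u\|_{H^{2r}}^{1/r}\,\||x|^ru\|_{L^2}^{2-1/r}$ (modulo lower-order pieces), and since $2-1/r<2$, Young's inequality yields the Gronwall inequality $\tfrac{d}{dt}\||x|^ru\|_{L^2}^2\le C(\|u\|_{H^{2r}})\big(\||x|^ru\|_{L^2}^2+1\big)$, hence persistence of the weight on $[0,T]$. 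I expect this closing step to be the main obstacle, as the mixed weight--derivative terms are precisely what obstruct a naive energy estimate and force the interpolation machinery.

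Finally, for global well-posedness under $\|u_0\|_{L^2}<\gamma$, I would combine the conserved $L^2$ norm \eqref{ap-1} with the conserved energy. Condition \eqref{AC} states exactly that the potential $P(\lambda)=\int_0^\lambda(\lambda-s)\,a(s)\,ds$ grows no faster than $|\lambda|^6$, so that the Gagliardo--Nirenberg inequality together with the smallness of $\|u_0\|_{L^2}$ absorbs the nonlinear part of the energy and produces a uniform-in-time $H^1$ bound; bootstrapping through the a priori estimate \eqref{ap-3} then upgrades this to a uniform $H^{2r}$ bound, so the local existence time does not shrink and may be iterated to any prescribed $T$. Since the weighted estimate above only allows at most exponential-in-$t$ growth of $\||x|^ru\|_{L^2}$, the weight stays finite on every interval $[0,T]$, and global well-posedness in $\mathcal{X}^{2r,r}$ follows.
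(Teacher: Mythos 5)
The paper does not actually prove Theorem \ref{kato}: it is a recalled result, and the text simply points to Theorems 8.1 and 8.2 of Kato \cite{KT1} for the proof. Your sketch is a faithful reconstruction of that argument --- $H^{2r}$ energy method for local well-posedness, the commutator identity $xU(t)=U(t)(x+3t\partial_x^2)$ explaining why the weight exponent $r$ is paired with Sobolev exponent $2r$, a weighted energy estimate whose critical term $r\int|x|^{2r-1}(\partial_x u)^2\,dx$ sits on the interpolation segment between $(2r,0)$ and $(0,r)$ and is closed by Young plus Gronwall, and the $L^2$/energy conservation laws with Gagliardo--Nirenberg and the smallness $\|u_0\|_{L^2}<\gamma$ for globalization --- and I see no gap at the level of a sketch. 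One attribution to fix: the inequality $\||x|^{r-1/2}\partial_x u\|_{L^2}^2\lesssim\|u\|_{H^{2r}}^{1/r}\,\||x|^r u\|_{L^2}^{2-1/r}$ is supplied by the derivative--weight interpolation of Lemma \ref{opera-l} (together with Lemma \ref{multindex} to replace $D$ by $\partial_x$ in the weighted $L^2$ space, since the Hilbert transform is not bounded there), not by the Abstract Interpolation Lemma \ref{G1CL}, which interpolates only in the weight exponent at fixed derivative order zero.
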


The proof of Theorem \ref{kato} is given in Kato's Theorem 8.1 and
Theorem 8.2, see\cite{KT1}.

\bigskip
In fact, the persistence property for dispersive
equations has been discussed recently, as in Nahas
\cite{N8}, and Nahas, Ponce \cite{NP9}. Moreover, the results
on \cite{N8} were extended recently by Nahas to generalized KdV
equation, see \cite{N10}, also we address the first work of the authors 
in this direction, see \cite{CN1}.
In this paper we are interested in removing the requirement that
the power of the weight in Theorem \ref{kato} is an integer, by
proving that a similar result is obtained for non integer values
of $r$.  One of the main results of this article is the following
\begin{theorem}\label{teomain}
Assume $r \geq 1$. If the IVP \eqref{IVP} is local well-posed in
$H^s$ for $s \geq 2 r$ and satisfies the {\em a priori} estimates
\eqref{ap-1}--\eqref{ap-4}, then the IVP \eqref{IVP} has the
properties of the unique existence and persistence in weighted
Sobolev spaces $\mathcal{X}^{s,\theta}$, for $s \geq 2 r$ and
$\theta \in [0,r]$.
\end{theorem}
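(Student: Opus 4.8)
The plan is to deduce everything from the assumed local well-posedness in $H^s$ together with the three a priori bounds \eqref{ap-1}, \eqref{ap-3}, \eqref{ap-4}, the passage from integer to fractional weight being carried out by interpolation. Uniqueness is the easy part: since $\mathcal{X}^{s,\theta}\hookrightarrow H^s$, any two solutions lying in $\mathcal{X}^{s,\theta}$ are in particular $H^s$-solutions, hence coincide by the assumed local well-posedness in $H^s$. Thus, fixing $r\ge 1$, $s\ge 2r$ and $\theta\in[0,r]$, and taking $u_0\in\mathcal{X}^{s,\theta}$, it remains only to produce the solution $u\in C([0,T];\mathcal{X}^{s,\theta})$ with a quantitative bound; by hypothesis we already have the unique $u\in C([0,T];H^s)$, so the entire issue is to control the weighted part $\|u(t)\|_{L^2(d\dot\mu_\theta)}$ and to show it depends continuously on $t$.

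First I would treat the boundedness. The elementary H\"older interpolation
\[
\||x|^\theta f\|_{L^2}\le \|f\|_{L^2}^{\,1-\theta/n}\,\||x|^{n}f\|_{L^2}^{\,\theta/n},\qquad 0\le\theta\le n,
\]
reduces a fractional weight to two neighbouring integer weights, and combined with \eqref{ap-1} at weight $0$ and \eqref{ap-4} at the relevant integer weight it would immediately give $\sup_{[0,T]}\|u(t)\|_{L^2(d\dot\mu_\theta)}<\infty$ --- \emph{provided} the data were already known to lie in the higher integer-weighted space. Since $u_0$ is only assumed in $\mathcal{X}^{s,\theta}$, this naive step is unavailable, and here the extension of the Abstract Interpolation Lemma enters: the point is to interpolate the energy estimate itself in the weight exponent, so as to obtain, for fractional $\theta$, an a priori bound of the form
\[
\|u(t)\|_{L^2(d\dot\mu_\theta)}\le C\,\|u_0\|_{L^2(d\dot\mu_\theta)}+A\big(\|u_0\|_{H^{a(r)}}\big),
\]
using only weight-$\theta$ data on the right. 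The $L^2$ and $\dot H^{a(r)}$ controls \eqref{ap-1}, \eqref{ap-3} furnish the unweighted endpoints and keep the nonlinear term $a(u)\,\partial_x u$ under control through the high $H^s$-regularity ($s\ge 2r$), while \eqref{ap-4} supplies the weighted endpoint.

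Next I would upgrade this $L^\infty_t$ bound to genuine time-continuity, i.e.\ $u\in C([0,T];L^2(d\dot\mu_\theta))$. Strong continuity in $H^s$ gives $u(t_n)\to u(t_0)$ in $L^2$ and hence, using the uniform weighted bound, weak convergence $u(t_n)\rightharpoonup u(t_0)$ in the Hilbert space $L^2(d\dot\mu_\theta)$; it then suffices to prove the norm-convergence $\|u(t_n)\|_{L^2(d\dot\mu_\theta)}\to\|u(t_0)\|_{L^2(d\dot\mu_\theta)}$, since weak convergence together with convergence of norms yields strong convergence in a Hilbert space. This norm-continuity I would obtain either from continuity at the bracketing integer weights, applying the H\"older interpolation above to the differences $u(t)-u(t')$, or by a Bona--Smith type approximation: regularise $u_0$ by data carrying extra weight and regularity, for which the integer theory (Theorem \ref{kato}) already delivers continuity, and then pass to the limit with the aid of the interpolated a priori estimate. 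Continuous dependence on the data follows along the same lines from the corresponding estimate applied to differences of solutions.

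The main obstacle is exactly the fractional-weight a priori estimate. For an integer $n$ one multiplies the equation by $|x|^{2n}u$ and integrates, the polynomial weight commuting with the $\partial_x^3$ term up to lower-order weighted quantities that are absorbed by Gronwall; for non-integer $\theta$ the homogeneous weight $|x|^{2\theta}$ is no longer polynomial and cannot be commuted through the Airy part of the equation by a direct energy computation. Interpolating the nonlinear flow between integer weights while retaining the correct dependence on $\|u_0\|_{L^2(d\dot\mu_\theta)}$ alone is precisely the content of the extended Abstract Interpolation Lemma, and is where the bulk of the work lies.
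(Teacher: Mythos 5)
Your proposal follows essentially the same route as the paper: uniqueness is inherited from the $H^s$ theory via the embedding $\mathcal{X}^{s,\theta}\hookrightarrow H^s$, and persistence comes from the generalized Abstract Interpolation Lemma, which converts the integer(real)-weight-$r$ a priori estimate into a weight-$\theta$ bound whose right-hand side involves only the weight-$\theta$ norm of the data, applied to solutions with regularized (Schwartz) data and then passed to the limit. The only part you leave implicit --- and it is where most of the paper's actual work on this theorem sits (Example \ref{EXAMPLETWO}) --- is that the lemma cannot be invoked for $u$ directly, since its hypotheses $(C2)$ and $(C4)$ require finiteness of the weight-$r$ norm of the data, so one must verify conditions $(C1)$--$(C4)$ uniformly for the approximating family $(u^{\lambda})$ before interpolating and taking $\lambda\to\infty$.
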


One observes that, in the above theorem $r$ is a real number.
Moreover, from the proof of the Theorem \ref{teomain}, it can be
inferred that, if one has local well-posedness result for given
data in $H^s$ and if the model under consideration satisfies  {\em
a priori} estimates \eqref{ap-1}-\eqref{ap-4}, then with the help
of an abstract interpolation lemma, it is easy to prove
persistence property in weighted Sobolev spaces.

As an application of Theorem \ref{teomain} we have the following
result

\begin{theorem}\label{teomain1}
Let $r \geq 1$ be a real number. Then, the IVP for the gKdV
equation \eqref{IVP-1} is locally well-posed in weighted Sobolev
spaces $\mathcal{X}^{s,\theta}$, for $s \ge 2r$ and $0 \le \theta
\le r$. Moreover, globally well-posed in $\mathcal{X}^{s,\theta}$,
for $0 \le \theta \le r$ and $s \ge 2r$ $($for $k \geq 4$ the initial data
must satisfies $\|u_0\|_{L^2}< \gamma$, for some $\gamma>0)$.
\end{theorem}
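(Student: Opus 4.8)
The plan is to exhibit \eqref{IVP-1} as the special case of the general problem \eqref{IVP} corresponding to $a(u)=u^k$, and then to apply Theorem \ref{teomain}. For this it suffices to check two hypotheses of that theorem: local well-posedness of \eqref{IVP-1} in $H^s$ for every $s\ge 2r$, and the validity of the a priori estimates \eqref{ap-1}--\eqref{ap-4}. The local well-posedness in $H^s$ is classical for KdV-type equations and, since $s\ge 2r\ge 2$, can be taken from Kato's Theorem \ref{kato} or from standard energy and smoothing arguments; this supplies the input needed for the local part of the statement.

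First I would identify Kato's structural condition \eqref{AC} as the exact dividing line between the two regimes appearing in the theorem. With $a(s)=s^k$ one computes $\int_0^\lambda(\lambda-s)\,s^k\,ds=\lambda^{k+2}/[(k+1)(k+2)]$, whence
\[
\limsup_{|\lambda|\to\infty}\frac{2}{|\lambda|^6}\int_0^\lambda(\lambda-s)\,s^k\,ds
=\limsup_{|\lambda|\to\infty}\frac{2\,\lambda^{k+2}}{(k+1)(k+2)\,|\lambda|^6}.
\]
This is $\le 0$ exactly when $k\le 3$, and is strictly positive (indeed $+\infty$) when $k\ge 4$. Thus for $k\le 3$ the condition \eqref{AC} holds unconditionally, while for $k\ge 4$ it is precisely this failure that forces the smallness hypothesis $\|u_0\|_{L^2}<\gamma$.

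Next I would establish the a priori estimates. Estimate \eqref{ap-1} is immediate from conservation of the $L^2$ mass. For \eqref{ap-3} I would use the conserved Hamiltonian $H(u)=\tfrac12\|u_x\|_{L^2}^2-\tfrac{1}{(k+1)(k+2)}\int u^{k+2}\,dx$ together with the $L^2$ conservation: by the one-dimensional Gagliardo--Nirenberg inequality $\int |u|^{k+2}\le C\,\|u\|_{L^2}^{(k+4)/2}\|u_x\|_{L^2}^{k/2}$, the energy is coercive whenever $k\le 3$ (the exponent $k/2<2$ lets Young's inequality absorb the nonlinear term regardless of $\|u_0\|_{L^2}$), giving a global bound on $\|u(t)\|_{\dot H^{a(r)}}$ in terms of $\|u_0\|_{H^{a(r)}}$; for $k\ge 4$ the same conclusion holds once $\|u_0\|_{L^2}<\gamma$ renders the Gagliardo--Nirenberg term subordinate and restores coercivity. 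The weighted estimate \eqref{ap-4} I would obtain from Kato's weighted energy estimates, bounding the growth of $\|u(t)\|_{L^2(d\dot{\mu}_r)}$ by the Sobolev quantities already controlled through \eqref{ap-1} and \eqref{ap-3}.

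With \eqref{ap-1}--\eqref{ap-4} verified and local well-posedness in $H^s$ for $s\ge 2r$ in hand, Theorem \ref{teomain} applies directly and yields both unique existence and the persistence property in $\mathcal{X}^{s,\theta}$ for all $s\ge 2r$ and $0\le\theta\le r$; the global-in-time character of the a priori bounds (unconditional for $k\le 3$, under smallness for $k\ge 4$) upgrades local to global well-posedness. The main obstacle I expect is the verification of the global a priori control \eqref{ap-3}--\eqref{ap-4}, and in particular securing coercivity of the conserved energy in the supercritical range $k\ge 4$: there the Hamiltonian is no longer sign-definite, and one must balance the nonlinear term against the conserved mass through the sharp Gagliardo--Nirenberg constant to extract the threshold $\gamma$ below which the high-order Sobolev and weighted norms remain bounded for all time.
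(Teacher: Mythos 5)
Your reduction to Theorem \ref{teomain}, the computation showing that Kato's condition \eqref{AC} with $a(s)=s^k$ holds exactly for $k\le 3$, and the sketch of the a priori bounds are all sensible as far as they go, but the proposal stops short of what Theorem \ref{teomain1} actually asserts. Theorem \ref{teomain} delivers only \emph{unique existence and persistence} in $\mathcal{X}^{s,\theta}$; well-posedness in the sense of Definition \ref{WELLPD} additionally requires continuity of the data-to-solution map $u_0\mapsto u$ from $\mathcal{X}^{s,\theta}$ into $C([0,T];\mathcal{X}^{s,\theta})$, and the paper's entire proof of Theorem \ref{teomain1} is devoted to exactly this missing step: continuous dependence in the $\|\cdot\|_{L^2(d\dot{\mu}_\theta)}$ norm. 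There one takes two solutions $u,v$, regularizes the data, writes the equation satisfied by the difference $w^\lambda=u^\lambda-v^\lambda$ (whose nonlinearity involves the symmetric polynomial $A(u^\lambda,v^\lambda)$), runs energy/Gronwall arguments to obtain continuous dependence in $L^2$ (the case $\theta=0$) and in $L^2(d\dot{\mu}_r)$ (the case $\theta=r$), and then applies the Abstract Interpolation Lemma \ref{G1CL} to the family $(w^\lambda)$ --- after checking that it satisfies $(C1)$--$(C4)$ --- to interpolate continuous dependence for every intermediate $\theta\in(0,r)$. None of this appears in your proposal, so the ``well-posed'' claim is not established; you have at best re-derived the hypotheses and conclusion of Theorem \ref{teomain}.

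Two secondary points. First, your derivation of \eqref{ap-3} from the conserved Hamiltonian only controls $\dot H^1$, whereas for gKdV one has $a(r)=2r\ge 2$ (Remark \ref{RCF}); to reach $\dot H^{a(r)}$ you must combine the $H^1$ bound with persistence of regularity or Kato's higher-order energy estimates, which you do not indicate. Second, for $k=4$ the limsup in \eqref{AC} equals $2/30$, which is positive but finite, not $+\infty$ as claimed. Both of these are repairable; the omission of continuous dependence is the structural gap.
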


The paper is organized as follows: In the rest of this section we
fix the notation and some background used throughout the paper.
The Abstract Interpolation Lemma is given at Section \ref{CL}. In
Section 3, we first show some conserved quantities, and prove a
nonlinear estimate. Then, we formulate the approximate problems
associated to the IVP \eqref{IVP} from them, we gain continuous
dependence in $H^s$ norms,
which is used to show mainly Theorem \ref{teomain1} at the end of this section.\\

\subsection{Notation and background} \label{Notation}

We follow the notations introduced in earlier paper \cite{CN1}.
For the sake of clarity we recall most of them here, clearly
adapted for the multidimensional setting and for the more general
case of $\theta \in [0,r]$, $r \geq 1$. Moreover, we present some
results used through the paper.

We use $dx$ to denote the Lebesgue measure on $\R^n$ and,
$$
  \begin{aligned}
  d\mu_\theta(x)&:= (1 + \|x\|^2)^ \theta \; dx,
  \\
    d\dot{\mu}_\theta(x)&:= \|x\|^{2 \theta} \; dx
  \end{aligned}
$$
to denote the Lebesgue-Stieltjes measures on $\RR^n$. Hence, given
a set $X$, a measurable function $f \in L^2(X;d\mu_\theta)$ means
that
$$
  \|f\|_{L^2(X ; d\mu_\theta)}^2= \int_X |f(x)|^2 \; d\mu_\theta(x)< \infty.
$$
When $X= \RR^n$, we write: $L^2(d\mu_\theta) \equiv
L^2(\RR^n;d\mu_\theta)$, and for simplicity
$$
  L^2 \equiv L^2(d\mu_0), \quad L^2(d\mu)
  \equiv L^2(d\mu_1)
$$
and similarly for the measure $d\dot{\mu}_\theta$. We will use the
Lebesgue space-time $L_{x}^{p}\mathcal{L}_{\tau}^{q}$ endowed with
the norm
$$
\|f\|_{L_{x}^{p}\mathcal{L}_{\tau}^{q}} = \big\|
\|f\|_{\mathcal{L}_{\tau}^{q}} \big\|_{L_{x}^{p}} = \Big(
\int_{\R} \Big( \int _{0}^{\tau} |f(x,t)|^{q} dt \Big)^{p/q} dx
\Big)^{1/p} \quad (1 \leq p,q < \infty).
$$
When the integration in the time variable is on the whole real
line, we use the notation $\|f\|_{L_{x}^{p}L_{t}^{q}}$. The
notation $\|u\|_{L^p}$ is used when there is no doubt about the
variable of integration. We adopt similar notations as above, when
$p$ or $q$ are $\infty$.
As usual, $H^s \equiv H^s(\RR^n)$, $\dot{H}^s \equiv
\dot{H}^s(\RR^n)$ are the classic Sobolev spaces in $\RR^n$,
endowed respectively with the norms
$$
\|f\|_{H^s}:= \|\widehat{f}\|_{L^2(d\mu_s)},
\quad
\|f\|_{\dot{H}^s}:=\|\widehat{f}\|_{L^2(d\dot{\mu}_s)}.
$$

\medskip
We study in this work the solutions of dispersive equations in the
weighted Sobolev spaces $\mathcal{X}^{s,\theta}$, defined as
\begin{equation}
\label{SSWW}
  \mathcal{X}^{s,\theta}:=H^s \cap L^2(d\mu_\theta),
\end{equation}
with the norm
$$
\| f\|_{\mathcal{X}^{s,\theta}}:=\| f\|_{H^{s}}+\|
f\|_{L^2(d\mu_\theta)}.
$$
We remark that, $\mathcal{X}^{s,r} \subseteq
\mathcal{X}^{s,\theta}$, for all $s\in \mathbb{R}$ and  $\theta\in
[0,r]$. Indeed, using H\"older's inequality, we have
\begin{equation}
\label{interpolx}
  \|f\|_{L^2(d\dot{\mu}_\theta)} \leq \|f\|_{L^2}^{1-\theta/r} \;
  \|f\|_{L^2(d\dot{\mu}_r)}^{\theta/r}.
\end{equation}
Moreover, we recall the classical notation of pseudo-differential
operators. For any real number $m$, we define the set
$$
  \mathcal{S}^m:= \{a \in C^\infty(\mathbb{R}^{2n};\C):
  |\partial_{x}^{\alpha}\partial_{\xi}^{\beta} a(x,\xi)| \leq
  C_{\alpha,\beta}(1+|\xi|)^{m-|\beta|}, \; \forall \alpha,\beta \in
  (\mathbb{Z}^+)^n\}.
$$
For $a \in \mathcal{S}^m$, we consider the differential operator
$a(x,D)$, defined for any $f \in \mathbb{S}(\R^n)$ in the
following sense
$$
  \widehat{\left(a(x,D)f\right)}(\xi)=a(x,\xi)\widehat{f}(\xi).
$$
The proof of the next two lemmas can be found in \cite{NP}.
\begin{lemma}\label{oper}
If $a\in\mathcal{S}^{0}$, then for each $b > 0$
$$
  a(x,D): L^{2}(\mathbb{R}^{n};d\mu_{b}) \to
  L^{2}(\mathbb{R}^{n};d\mu_{b})
$$
is a bounded differential operator.
\end{lemma}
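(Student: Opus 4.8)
The plan is to reduce the weighted bound to an unweighted $L^2$ bound for a conjugated operator, and then to control that operator through the symbolic calculus. Write $w(x):=(1+\|x\|^2)^{b/2}$, so that $\|f\|_{L^2(d\mu_b)}=\|wf\|_{L^2}$, and denote by $M_w$ the operator of multiplication by $w$. Since $f\mapsto wf$ is an isometry from $L^2(\R^n;d\mu_b)$ onto $L^2(\R^n)$, the asserted estimate $\|a(x,D)f\|_{L^2(d\mu_b)}\le C\|f\|_{L^2(d\mu_b)}$ is equivalent to the $L^2(dx)$-boundedness of the conjugated operator
$$
T:=M_w\,a(x,D)\,M_w^{-1}.
$$
Thus everything reduces to proving that $T$ is bounded on $L^2(\R^n)$.

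First I would record the base case: for $a\in\mathcal S^0$ the operator $a(x,D)$ is bounded on $L^2(\R^n)$ by the Calderón--Vaillancourt theorem, which is exactly the $b=0$ instance of the lemma. The idea is then to realize $T$ as $a(x,D)$ plus an operator of strictly lower order. Using the commutator $[M_w,a(x,D)]=M_w\,a(x,D)-a(x,D)\,M_w$ one has the algebraic identity
$$
T=M_w\,a(x,D)\,M_w^{-1}=a(x,D)+[M_w,a(x,D)]\,M_w^{-1},
$$
so it suffices to show that $R:=[M_w,a(x,D)]\,M_w^{-1}$ is bounded on $L^2$.

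For $R$ I would invoke the composition calculus. The operator $M_w\,a(x,D)$ has symbol $w(x)\,a(x,\xi)$, while $a(x,D)\,M_w$ has the asymptotic symbol $\sum_\alpha \frac{1}{\alpha!}\,\partial_\xi^\alpha a(x,\xi)\,D_x^\alpha w(x)$; subtracting, the $\alpha=0$ terms cancel and the commutator $[M_w,a(x,D)]$ has symbol whose leading part is $-\,\partial_\xi a(x,\xi)\cdot D_x w(x)$. Now $a\in\mathcal S^0$ forces $\partial_\xi a\in\mathcal S^{-1}$, gaining a factor $(1+|\xi|)^{-1}$, while $|\partial_x w(x)|\lesssim (1+\|x\|^2)^{(b-1)/2}$, so that $\partial_x w/w$ gains a factor $(1+\|x\|^2)^{-1/2}$ in $x$. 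Hence the full symbol of $R=[M_w,a(x,D)]\,M_w^{-1}$ is bounded together with all its derivatives (indeed decaying like $(1+|\xi|)^{-1}(1+\|x\|^2)^{-1/2}$), and a second application of Calderón--Vaillancourt gives that $R$ is bounded on $L^2$. Combining, $T=a(x,D)+R$ is bounded on $L^2$, which is the claim.

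The main obstacle is to make this symbolic calculus rigorous when $b$ is not a nonnegative integer: the weight $w(x)=(1+\|x\|^2)^{b/2}$ grows in $x$ and hence lies outside the usual Hörmander classes $\mathcal S^m_{\rho,\delta}$, which control $\xi$-growth only. Consequently the composition formula, and above all the estimate for the tail of its (now infinite) asymptotic expansion, must be justified in a two-variable calculus that treats $x$ and $\xi$ symmetrically. A clean way to bypass the non-integer difficulty is to first establish the bound for every nonnegative integer $b$ by iterating the commutator identity $b$ times (reducing to finitely many commutators with the coordinate multiplications $x_j$, where the expansion terminates), and then to recover all real $b>0$ by Stein--Weiss complex interpolation, using that $[L^2(d\mu_{b_0}),L^2(d\mu_{b_1})]_\vartheta=L^2(d\mu_{(1-\vartheta)b_0+\vartheta b_1})$ together with the boundedness of $a(x,D)$ on each endpoint space.
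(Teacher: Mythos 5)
Your argument is sound, but note first that the paper does not actually prove this lemma: it is quoted with the single line ``The proof of the next two lemmas can be found in \cite{NP}'', so there is no in-paper proof to match, and your write-up is a genuine addition. Of the two routes you sketch, the one that actually carries the proof is the last paragraph, and it is correct: for $b=0$ the $L^2$-boundedness of operators with symbols in $\mathcal{S}^0$ is classical; for a positive integer $b$ one expands $(1+\|x\|^2)^b=\sum_{|\alpha|\le b}c_\alpha x^{2\alpha}$ so that $\|g\|^2_{L^2(d\mu_b)}\simeq\sum_{|\alpha|\le b}\|x^\alpha g\|^2_{L^2}$, and the \emph{exact} (terminating) identity $x^\alpha a(x,D)=\sum_{\beta\le\alpha}\binom{\alpha}{\beta}i^{|\beta|}(\partial_\xi^\beta a)(x,D)\,x^{\alpha-\beta}$, with each $\partial_\xi^\beta a\in\mathcal{S}^{-|\beta|}\subset\mathcal{S}^0$, gives the integer case with no asymptotic expansion at all; Stein--Weiss interpolation with change of measure, $\bigl[L^2(d\mu_{b_0}),L^2(d\mu_{b_1})\bigr]_\vartheta=L^2(d\mu_{(1-\vartheta)b_0+\vartheta b_1})$, then yields every real $b>0$ from consecutive integer endpoints. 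By contrast, the middle paragraph (conjugation by $M_w$ and a symbol computation for $[M_w,a(x,D)]M_w^{-1}$) is only heuristic as written: the weight $w$ grows in $x$, so neither the composition formula nor, more importantly, the remainder of the infinite asymptotic expansion is controlled by the standard $\xi$-only Hörmander calculus; making that route rigorous requires a two-variable (SG or Weyl--Hörmander) calculus. You flag this honestly and your bypass renders it unnecessary, so the proof stands; I would simply present the integer-plus-interpolation argument as the proof and relegate the conjugation picture to motivation.
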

\begin{lemma}\label{opera-l}
Let $a,b>0$. If $D^a f \in L^2(\mathbb{R}^n)$ and
$f \in L^2(\R^n; d\mu_b)$, then for each $\theta \in [0,1]$
\begin{equation}
   \|D^{(1-\theta)a} f\|_{L^2(d\mu_{\theta b})} \leq C
   \; \|f\|_{L^2(d\mu_b)}^{\theta} \; \|D^{a}
   f\|_{L^2}^{1-\theta}.
\end{equation}
\end{lemma}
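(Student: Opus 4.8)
The inequality is equivalent, after writing the weight as a multiplication operator $W^{c}:=(1+\|x\|^2)^{c/2}$ (so that $\|g\|_{L^2(d\mu_c)}=\|W^{c}g\|_{L^2}$), to the bound
\begin{equation*}
\|W^{\theta b}D^{(1-\theta)a}f\|_{L^2}\le C\,\|W^{b}f\|_{L^2}^{\theta}\,\|D^{a}f\|_{L^2}^{1-\theta}.
\end{equation*}
I would read the two factors on the right as the endpoint quantities: at $\theta=0$ the left side equals $\|D^{a}f\|_{L^2}$, and at $\theta=1$ it equals $\|W^{b}f\|_{L^2}=\|f\|_{L^2(d\mu_b)}$. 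Since the estimate interpolates \emph{simultaneously} in the order of the derivative (a frequency-side quantity) and in the power of the weight (a physical-side quantity), no elementary H\"older splitting can succeed, and the natural tool is complex (Stein) interpolation. Accordingly I would introduce the analytic family
\begin{equation*}
F(z):=W^{zb}\,D^{(1-z)a}f,\qquad 0\le \mathrm{Re}\,z\le 1,
\end{equation*}
analytic in the open strip and continuous up to its boundary (first for $f$ in the Schwartz class, then by density, using that both endpoint norms are finite by hypothesis), and such that $F(\theta)=W^{\theta b}D^{(1-\theta)a}f$, so that $\|F(\theta)\|_{L^2}$ is exactly the quantity to be estimated.

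The proof then reduces to controlling $F$ on the two boundary lines. On $\mathrm{Re}\,z=0$ write $z=it$; since $D^{(1-it)a}=D^{-ita}D^{a}$, we have $F(it)=W^{itb}D^{-ita}(D^{a}f)$, where $W^{itb}$ is multiplication by the unimodular function $(1+\|x\|^2)^{itb/2}$ and $D^{-ita}$ is a Fourier multiplier with unimodular symbol $|\xi|^{-ita}$, so both are $L^2$-isometries and
\begin{equation*}
\|F(it)\|_{L^2}=\|D^{a}f\|_{L^2}=:M_0,
\end{equation*}
with no dependence on $t$. On $\mathrm{Re}\,z=1$ write $z=1+it$; since the two multiplications $W^{b}$ and $W^{itb}$ commute and $W^{itb}$ is an isometry,
\begin{equation*}
\|F(1+it)\|_{L^2}=\|W^{b}D^{-ita}f\|_{L^2}=\|D^{-ita}f\|_{L^2(d\mu_b)}.
\end{equation*}
This is where Lemma \ref{oper} enters: the symbol $|\xi|^{-ita}$ lies in $\mathcal{S}^{0}$ with seminorms $C_{\alpha,\beta}$ growing only polynomially in $|t|$, so $D^{-ita}$ is bounded on $L^2(d\mu_b)$ with operator norm $\le C(1+|t|)^{N}=:M_1(t)$. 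The one honest subtlety is that $|\xi|^{-ita}$ is singular at $\xi=0$ (its derivatives blow up like $|\xi|^{-|\beta|}$), so it is not literally a symbol in $\mathcal{S}^{0}$; I would resolve this by a smooth high/low frequency splitting $|\xi|^{-ita}=(1-\chi(\xi))|\xi|^{-ita}+\chi(\xi)|\xi|^{-ita}$, applying Lemma \ref{oper} to the high-frequency part (which is a genuine $\mathcal{S}^{0}$ symbol) and handling the compactly supported low-frequency part, a bounded multiplier with an integrable kernel, directly on $L^2(d\mu_b)$.

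With the constant bound $M_0$ on one line and the polynomially growing bound $M_1(t)$ on the other, the growth is admissible, and I would conclude by Stein's interpolation theorem, equivalently by the three-lines lemma applied to $z\mapsto\langle F(z),g\rangle$ for $g\in L^2$ with $\|g\|_{L^2}=1$, after inserting the regularizing factor $e^{\delta(z^2-z)}$ and letting $\delta\downarrow 0$ to absorb the polynomial growth in $t$. This yields
\begin{equation*}
\|F(\theta)\|_{L^2}\le C\,M_0^{1-\theta}M_1^{\theta}=C\,\|D^{a}f\|_{L^2}^{1-\theta}\,\|f\|_{L^2(d\mu_b)}^{\theta},
\end{equation*}
with $C=C(a,b,n,\theta)$ independent of $f$, which is precisely the assertion. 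I expect the main obstacle to be not the interpolation machinery itself but the uniform, admissibly growing, weighted boundedness of the imaginary powers $D^{-ita}$ on $L^2(d\mu_b)$ for \emph{arbitrary} $b>0$ together with the low-frequency homogeneity issue above: for $A_2$ weights this would be classical, but here $b$ is unrestricted, so one genuinely relies on the smoothness of the weight $(1+\|x\|^2)^{b}$ and on the pseudodifferential boundedness supplied by Lemma \ref{oper}.
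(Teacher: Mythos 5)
Your architecture is the right one, and it is in fact the architecture of the proof the paper points to: the paper gives no argument of its own for Lemma \ref{opera-l} but defers to Nahas--Ponce \cite{NP}, where the corresponding lemma is proved exactly by Stein (three-lines) interpolation of an analytic family of the form $z\mapsto (1+|x|^{2})^{zb/2}(\,\cdot\,)^{(1-z)a}f$, with the line $\mathrm{Re}\,z=0$ controlled by unitarity and the line $\mathrm{Re}\,z=1$ controlled by the weighted $L^{2}$-boundedness of order-zero pseudodifferential operators, i.e.\ by Lemma \ref{oper}. Up to that point your proposal coincides with the source.

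The genuine gap sits precisely at what you call ``the one honest subtlety,'' and the repair you sketch does not close it. In \cite{NP} the operator is the inhomogeneous $J=(1-\Delta)^{1/2}$, whose imaginary powers have symbols $\langle\xi\rangle^{-ita}$ genuinely in $\mathcal{S}^{0}$ with polynomially growing seminorms, so Lemma \ref{oper} applies on the line $\mathrm{Re}\,z=1$. For the homogeneous $D$ this boundary estimate fails: the low-frequency piece $\chi(\xi)|\xi|^{-ita}$ is a convolution operator whose kernel decays only like $|x|^{-n}$ (the inverse Fourier transform of $|\xi|^{-ita}$ is a constant multiple of $|x|^{-n+ita}$), so it is not ``a bounded multiplier with an integrable kernel'' in any sense strong enough to absorb the weight $(1+|x|^{2})^{b}$ for arbitrary $b>0$; and in fact $D^{-ita}$ is \emph{not} bounded on $L^{2}(d\mu_{b})$ once $b\ge n/2$, since e.g.\ for a Gaussian $f$ in dimension one $\widehat{D^{it}f}=|\xi|^{it}e^{-\xi^{2}}$ fails to lie in $H^{b}$ for $b\ge 1/2$. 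Worse, the statement as printed is false for the homogeneous $D$, so no argument can succeed: take $n=1$, $\widehat{f}(\xi)=e^{-\xi^{2}}$, $a=1$, $b=10$, $\theta=0.9$; then $D^{a}f\in L^{2}$ and $f\in L^{2}(d\mu_{b})$, while
\begin{equation*}
\|D^{0.1}f\|_{L^{2}(d\mu_{9})}=\bigl\| \,|\xi|^{0.1}e^{-\xi^{2}}\bigr\|_{H^{9}}=+\infty,
\end{equation*}
because $|\xi|^{0.1}$ belongs to $H^{s}$ near the origin only for $s<0.6$. The lemma, and your proof essentially verbatim, are correct once $D$ is replaced by $J$ (the form actually stated and proved in \cite{NP}): with $\langle\xi\rangle^{-ita}$ in place of $|\xi|^{-ita}$ both boundary lines are controlled exactly as you describe, and the three-lines argument with the regularizing factor $e^{\delta(z^{2}-z)}$ goes through. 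So the correct conclusion of your analysis is not that the low-frequency issue can be patched, but that it is fatal to the statement with $D$ and invisible for the statement with $J$.
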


Now, applying  Lemma \ref{oper} we have the following
\begin{lemma}
\label{multindex} Let $\beta  \in (\Z^+)^n$ be a multi-index and
$b>0$ fixed. If $f \in \mathbb{S}(\R^n)$, then
\begin{equation}
  \| \partial^{\beta} f \|_{{L^2(d\dot{\mu}_b)}} \le C \|
  D^{|\beta|}f\|_{{L^2(d\dot{\mu}_b)}} +C \|f\|_{{L^2(d\dot{\mu}_b)}}.
\end{equation}

\end{lemma}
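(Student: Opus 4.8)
The plan is to pass to the frequency side and realize $\partial^\beta$ as a zero-order operator applied to $D^{|\beta|}f$, plus a smoothing remainder, so that Lemma \ref{oper} does the work. Since $\widehat{\partial^\beta f}(\xi) = (i\xi)^\beta\,\widehat f(\xi)$ while $\widehat{D^{|\beta|}f}(\xi)=|\xi|^{|\beta|}\widehat f(\xi)$, the multiplier that would relate the two directly is $(i\xi)^\beta/|\xi|^{|\beta|}$, which is homogeneous of degree $0$ and bounded but \emph{not} smooth at $\xi=0$; hence it does not belong to $\mathcal{S}^0$ and Lemma \ref{oper} cannot be applied to it as it stands. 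This low-frequency singularity is precisely the obstruction, and it is what forces the lower-order term $C\|f\|_{L^2(d\dot{\mu}_b)}$ to appear on the right-hand side.

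To get around it, I would fix a cutoff $\psi\in C^\infty(\R^n)$ with $\psi\equiv 1$ on $\{|\xi|\le 1\}$ and $\mathrm{supp}\,\psi\subset\{|\xi|\le 2\}$, and split the symbol as
\begin{equation*}
  (i\xi)^\beta=\underbrace{(i\xi)^\beta\,\psi(\xi)}_{=:\,\sigma_0(\xi)}+\underbrace{(i\xi)^\beta\,(1-\psi(\xi))}_{=:\,\sigma_1(\xi)}.
\end{equation*}
The symbol $\sigma_0$ is smooth and compactly supported, so $\sigma_0\in\mathcal{S}^0$ (in fact $\mathcal{S}^{-N}$ for every $N$); by Lemma \ref{oper} the operator $\sigma_0(D)$ is bounded on the weighted space, and the corresponding piece of $\partial^\beta f$ is controlled by $C\|f\|_{L^2(d\dot{\mu}_b)}$.

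For the high-frequency piece I would factor
\begin{equation*}
  \sigma_1(\xi)=m(\xi)\,|\xi|^{|\beta|},\qquad m(\xi):=\frac{(i\xi)^\beta\,(1-\psi(\xi))}{|\xi|^{|\beta|}}.
\end{equation*}
On $\mathrm{supp}(1-\psi)$ we have $|\xi|\ge 1$, so $m$ is smooth everywhere (it vanishes where $1-\psi$ does, away from the origin), and since $\xi^\beta/|\xi|^{|\beta|}$ is homogeneous of degree $0$, the bounds $|\partial_\xi^\gamma m(\xi)|\le C_\gamma(1+|\xi|)^{-|\gamma|}$ hold for all $\gamma$; as $m$ is independent of $x$, this gives $m\in\mathcal{S}^0$. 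Because $|\xi|^{|\beta|}\widehat f=\widehat{D^{|\beta|}f}$, the operator $\sigma_1(D)$ acts as $\sigma_1(D)f=m(D)\,D^{|\beta|}f$, and a second application of Lemma \ref{oper} bounds $m(D)$, so this piece is controlled by $C\|D^{|\beta|}f\|_{L^2(d\dot{\mu}_b)}$. Summing the two estimates with the triangle inequality yields the claim.

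The only genuinely delicate point is the verification that $m\in\mathcal{S}^0$ near the origin, that is, confining the singularity of $\xi^\beta/|\xi|^{|\beta|}$ entirely to the smoothing symbol $\sigma_0$; together with the observation that the operator bound of Lemma \ref{oper} is available on the homogeneous weighted space $L^2(d\dot{\mu}_b)$, which follows from the same argument of \cite{NP} that establishes it on $L^2(d\mu_b)$. Everything else reduces to Plancherel's identity and the triangle inequality.
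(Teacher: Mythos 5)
Your argument is correct, and it runs on the same engine as the paper's proof --- Lemma \ref{oper} applied to a zero-order symbol --- but with a different decomposition. The paper takes the single symbol $a(\xi)=\xi^\beta(1+|\xi|^2)^{-|\beta|/2}\in\mathcal{S}^0$ and writes $\partial^\beta f= i^{|\beta|}\,a(D)J^\beta f$, where $\widehat{J^\beta f}(\xi)=(1+|\xi|^2)^{|\beta|/2}\widehat f(\xi)$; Lemma \ref{oper} then controls $\partial^\beta f$ by $J^\beta f$ in the weighted norm, and the two terms on the right-hand side of the statement are reached by comparing $(1+|\xi|^2)^{|\beta|/2}$ with $1+|\xi|^{|\beta|}$, a final step the paper leaves implicit. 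You instead excise the origin with a cutoff, so that the high-frequency multiplier $m(\xi)=(i\xi)^\beta(1-\psi(\xi))/|\xi|^{|\beta|}$ is a genuine $\mathcal{S}^0$ symbol acting directly on $D^{|\beta|}f$, while the compactly supported low-frequency piece is smoothing and lands on $\|f\|_{L^2(d\dot{\mu}_b)}$. Your route has the advantage of producing exactly the two terms of the statement with no further manipulation, and of making explicit why the lower-order term is unavoidable (the failure of $\xi^\beta/|\xi|^{|\beta|}$ to be smooth at $\xi=0$). Both proofs share the same caveat, which you flag and the paper passes over silently: Lemma \ref{oper} is stated for the inhomogeneous weight $d\mu_b$, and one must accept, as in \cite{NP}, that the operator bound persists for the homogeneous weight $d\dot{\mu}_b$.
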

\begin{proof}
Let us consider $a(x,\xi)=
\dfrac{\xi^{\beta}}{(1+|\xi|^2)^{|\beta|/2}}$, we can see that, $
a \in\mathcal{S}^{0}$. Then, applying Lemma \ref{oper}, the
associated operator a(x,D) is bounded in
$L^{2}(\mathbb{R}^{n};d\mu_{b})$. Therefore, it follows that
\begin{equation}
\label{doslema}
   \|a(\cdot,D)g\|_{L^2(d\dot{\mu}_b)} \le C \|g\|_{L^2(d\dot{\mu}_b)}.
\end{equation}
If $\widehat{J^\beta f} (\xi)=
   (1+|\xi|^2)^{|\beta|/2} \, \widehat{f}(\xi)$, considering $g=J^{\beta}f$, then
   $a(D)g=(1/i^{|\beta|})\partial^{\beta}f$ and the lemma is proved.
\end{proof}

Now, we consider the following evolution equation
\begin{equation}
\label{IVPgeral}
\begin{cases}
\p_t u +Lu +
F(u, \nabla_x u)=  0, \quad (t,x) \in \R \times\R^n,\\
u(0,x)  =  u_0(x),
\end{cases}
\end{equation}
where the linear part of the equation $Lu$, is defined by
$$
  \widehat{Lu}(\xi)= i \, h(\xi) \, \widehat{u}(\xi),
$$
for some polynomial symbol $h(\xi)$ real valued, and $F(x,y)$ is a
function with $F(0,0)=0$ (for the KdV equation $h(\xi)= -\xi^3$,
$\xi \in \R$, $F(x,y)= a(x) y$, and for the non-linear
Schr\"odinger equation $h(\xi)=\sum_{k=1}^{n}\xi^{2e_k}= |\xi|^2$
where $e_k$ is the k-th unit vector, $\xi \in \R^n$, $F(x,y)=
|x|^{\alpha-1} x$, $\alpha>1$).
\begin{theorem}\label{teorlinearpers}
Let $r \ge 1$ and $u\in C([-T,T]; \mathcal{X}^{s,r})$ be a smooth
solution of the linear IVP
\begin{equation}\label{IVPgerallinear}
\begin{cases}
\p_t u +Lu
=  0, \quad (t,x) \in \R \times\R^n,\\
u(0,x)  =  u_0(x),
\end{cases}
\end{equation}
where the linear operator $L$ is defined with symbol
$h(\xi)=\sum_{j=1}^{p}C_j \xi^{\beta_j}$, $\xi \in \R^n$, $\beta_j
\in (\Z^+)^n$, $|\beta_j|>1$, $j=1,\dots,p$. Then, $u$ satisfies
the inequality \eqref{ap-4} with \be\label{adere}
a(r)=(\max_{j=1,\dots,p}|\beta_j|-1) \; r. \ee
\end{theorem}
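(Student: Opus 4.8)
The plan is to solve the linear problem \eqref{IVPgerallinear} explicitly on the Fourier side, transfer the spatial weight into a fractional derivative acting on the frequency variable, and then separate the contribution of the oscillatory symbol $e^{-ith(\xi)}$ from that of the initial datum. Since $F\equiv 0$ here, the solution is exactly the group $\widehat{u(t)}(\xi)=e^{-ith(\xi)}\,\widehat{u_0}(\xi)$, and as $h$ is real valued the phase factor satisfies $|e^{-ith(\xi)}|=1$. The multiplication by $\|x\|^{2r}$ defining $L^2(d\dot{\mu}_r)$ transfers, under the Fourier transform, to the homogeneous fractional derivative $D^{r}_\xi=(-\Delta_\xi)^{r/2}$ acting on $\widehat{u_0}$, so that Plancherel's identity gives
\begin{equation*}
\|u(t)\|_{L^2(d\dot{\mu}_r)}=c\,\big\|D^{r}_\xi\big(e^{-ith(\xi)}\widehat{u_0}(\xi)\big)\big\|_{L^2}.
\end{equation*}
The whole estimate thus reduces to controlling a fractional derivative of the product $e^{-ith}\,\widehat{u_0}$.

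I would first treat integer $r=m$, where the classical Leibniz rule expands $\partial_\xi^{\alpha}\big(e^{-ith}\widehat{u_0}\big)$ into a sum over $\gamma\le\alpha$ of $\partial^{\gamma}(e^{-ith})\,\partial^{\alpha-\gamma}\widehat{u_0}$. A Fa\`a di Bruno computation shows $\partial^{\gamma}(e^{-ith})=e^{-ith}\,P_\gamma(t,\xi)$, where $P_\gamma$ is a polynomial whose $\xi$-degree is at most $|\gamma|\,(\max_j|\beta_j|-1)$ and whose $t$-degree is $|\gamma|\le m$, whence $|\partial^{\gamma}(e^{-ith})|\le C(T)(1+|\xi|)^{|\gamma|(\max_j|\beta_j|-1)}$ on $[-T,T]$. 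The term $\gamma=0$ produces $\|\partial_\xi^{\alpha}\widehat{u_0}\|_{L^2}$, which is comparable to $\|u_0\|_{L^2(d\dot{\mu}_m)}$, while the term $\gamma=\alpha$ produces $\|(1+|\xi|)^{m(\max_j|\beta_j|-1)}\widehat{u_0}\|_{L^2}$, comparable to $\|u_0\|_{H^{a(m)}}$ with $a(m)$ as in \eqref{adere}; the intermediate terms, carrying both some weight and some frequency growth, are dominated by the sum of these two through the interpolation inequality of Lemma \ref{opera-l} together with Lemma \ref{multindex}. This establishes \eqref{ap-4} for every integer $m\ge1$, with $A_2$ linear.

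For general real $r\ge1$ I would regard the bound just obtained as the statement that the fixed linear operator $S_t:u_0\mapsto u(t)$ maps $L^2(d\dot{\mu}_m)\cap H^{a(m)}$ boundedly into $L^2(d\dot{\mu}_m)$, and then interpolate between two consecutive integers $m\le r\le m+1$. Complex interpolation of weighted $L^2$ spaces gives $[L^2(d\dot{\mu}_m),L^2(d\dot{\mu}_{m+1})]_\theta=L^2(d\dot{\mu}_r)$ for $r=m+\theta$, and since $a(r)=(\max_j|\beta_j|-1)\,r$ is linear in $r$ one also has $[H^{a(m)},H^{a(m+1)}]_\theta=H^{a(r)}$; interpolating the intersection of the two domains then identifies the intermediate domain with $L^2(d\dot{\mu}_r)\cap H^{a(r)}$, so that boundedness of $S_t$ transfers to the non-integer exponent and \eqref{ap-4} follows for all $r\ge1$.

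The main obstacle is twofold. Analytically it lies in the integer base case: one must check that the weight/frequency-growth balance of the mixed Leibniz terms is exactly the one covered by Lemma \ref{opera-l}, so that no term of order higher than $a(m)$ in frequency or than $m$ in weight survives; this is precisely where the hypothesis $|\beta_j|>1$ and the exact value $a(r)=(\max_j|\beta_j|-1)\,r$ enter. The remaining technical point is to justify that the complex interpolation functor commutes with the intersection, namely $[\,X_0\cap Y_0,\,X_1\cap Y_1\,]_\theta=[X_0,X_1]_\theta\cap[Y_0,Y_1]_\theta$ for the weighted-$L^2$ and Sobolev couples at hand, which holds here since each is a compatible couple admitting a common dense core in $\mathbb{S}(\R^n)$.
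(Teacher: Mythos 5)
For the integer case $r=m\in\Z^+$ your argument is correct, and it takes a genuinely different route from the paper. You exploit the explicit propagator $\widehat{u(t)}=e^{-ith}\widehat{u_0}$ and run Leibniz/Fa\`a di Bruno on the phase, after which the cross terms are closed by exactly the interpolation inequality of Lemma \ref{opera-l}; the bookkeeping you describe (each $\partial^\gamma e^{-ith}$ costs $|\gamma|(\max_j|\beta_j|-1)$ powers of $\xi$, balanced against the $m-|\gamma|$ remaining powers of the weight) is precisely the balance that makes $a(r)$ in \eqref{adere} sharp. The paper instead never solves the equation: it differentiates $\int|x|^{2r}|u|^2\,dx$ in time, passes to the Fourier side via the multinomial and Leibniz formulas, observes that the top-order term $\eta=\mathbf{0}$ in \eqref{persxaver8} is purely imaginary and drops out of \eqref{persxaver}, and bounds the remainder in \eqref{perscxaver12} by $\int|x|^{2r}|u|^2$ plus interpolated quantities, closing with a Gronwall-type argument. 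The energy route buys something your propagator route cannot: it survives the addition of the nonlinearity $F(u,\nabla_x u)$ (see Remark \ref{RCF}), which is how the paper actually uses this computation; your route is cleaner but is confined to the linear flow. Note also that the paper's proof, like your base case, is really an integer-$r$ proof (the expansion $(\sum_j x_j^2)^r=\sum_{|\alpha|=r}\binom{r}{\alpha}x^{2\alpha}$ requires $r\in\Z^+$, consistent with the standing assumption $r\in\Z^+$ accompanying \eqref{ap-4}).

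The genuine gap is in your passage to non-integer $r$. You need the inclusion $L^2(d\dot{\mu}_r)\cap H^{a(r)}\subseteq[X_m,X_{m+1}]_\theta$ with $X_j=L^2(d\dot{\mu}_j)\cap H^{a(j)}$, i.e.\ the \emph{reverse} of the only inclusion that complex interpolation of intersections gives for free, namely $[A_0\cap B_0,A_1\cap B_1]_\theta\hookrightarrow[A_0,A_1]_\theta\cap[B_0,B_1]_\theta$. The equality you assert does not follow from the two couples sharing a dense core in $\mathbb{S}(\R^n)$; it fails for general compatible couples, and for these particular weighted-$L^2$/Sobolev couples it is a nontrivial statement (essentially because $|x|$ and $D$ do not commute) that would itself require proof. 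Without it, your interpolated bound applies only to data in $[X_m,X_{m+1}]_\theta$, which you have not identified with $\mathcal{X}$-type data of exponent $r$. This is exactly the difficulty the paper's architecture is designed to avoid: non-integer weight exponents are never reached by interpolating the operator between integer levels, but by the real-variable Abstract Interpolation Lemma \ref{G1CL}, which interpolates in the weight exponent $\theta\in(0,r)$ between the conserved $L^2$ bound and the integer-level bound \eqref{ap-4}, at the price of verifying the structural conditions $(C1)$--$(C4)$ on the family of solutions. If you want to keep your scheme, you should either prove the interpolation identity for these specific intersections or replace that step by an argument in the spirit of Lemma \ref{G1CL} (or by a direct estimate of $\|D_\xi^{r}(e^{-ith}\widehat{u_0})\|_{L^2}$ using a fractional Leibniz rule).
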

\begin{proof}
%
By the Bona-Smith approximation argument, we can suppose that
$u(t) \in S(\R^n)$ or in some $\mathcal{X}^{s_0,r}$ with $s \ll
s_0 $. Moreover, without lost of generality, we can suppose that
$h(\xi)=\xi^{\beta}$, for some multi-index $\beta$, $|\beta|>1$.
Multiplying \eqref{IVPgerallinear} by $|x|^{2r} \overline{u}$,
taking the real part and integrating, we have
\begin{align}\label{persxaver}
0=\partial_t \int |x|^{2r}|u|^2 dx+ 2 \textrm{Re}\, \int (x
\cdot\overline{x})^r \overline{u} \,Lu dx.
\end{align}
Using the notation of multi-indices $\alpha=(\alpha_1, \dots,
\alpha_n)$, $\alpha_j \in \Z^+$, $j=1,\dots,n$, we have
respectively the multinomial and Leibniz formula
\be\label{multin} \left(\sum_{j=1}^{n} x_j^2\right)^r=
\sum_{|\alpha|=r}\binom{r}{\alpha}x^{2\alpha}, \quad
\partial^{\alpha} (f(\xi)g(\xi))=\sum_{\eta \le
\alpha}\binom{\alpha}{\eta}\partial^{\eta}
f(\xi)\partial^{\alpha-\eta} g(\xi). \ee
Applying the definition of the Fourier transform, we obtain
\be\label{persistencia1}\partial_{\xi}^{\alpha}
\widehat{u}(\xi)=(-i)^{|\alpha|}\widehat{x^{\alpha}  u}  (\xi) \ee
and by the multinomial formula, Plancherel equality and
\eqref{persistencia1}, we can write
\be
\label{persistencia3}
  \int |x|^{2r} |u(x)|^2 dx= (-1)^{|\alpha|} \int
  \sum_{|\alpha|=r}\binom{r}{\alpha} \left|\partial_{\xi}^{\alpha}
  \widehat{u}(\xi) \right|^2 d\xi.
\ee
Now, considering the second term in \eqref{persxaver}, we have
\begin{align}\label{persxaver7}
\int (x \cdot\overline{x})^r \overline{u} \,Lu dx&= \int  \left(\sum_{j=1}^{n} x_j^2\right)^r\,\overline{u} \,Lu\,dx \nonumber\\
&= \int  \sum_{|\alpha|=r}\binom{r}{\alpha}x^{2\alpha}\,\overline{u} \,Lu\,dx\nonumber\\
&= \sum_{|\alpha|=r}\binom{r}{\alpha}\int  x^{\alpha}\,\overline{u} \,x^{\alpha}\,Lu\,dx,\nonumber\\
&= \sum_{|\alpha|=r}\binom{r}{\alpha}\int
\overline{\widehat{x^{\alpha}\,u}
}\,\widehat{x^{\alpha}\,Lu}\,d\xi,
\end{align}
where in the last inequality we used Plancherel equality. By
Leibniz formula, identity \eqref{persistencia1} and definition of
$L$ with $h(\xi)=\xi^{\beta}$,  we have
\begin{align}\label{persxaver8}
\int \overline{\widehat{x^{\alpha}\,u} }\,\widehat{x^{\alpha}\,Lu}\,d\xi &= (-1)^{|\alpha|}
\int  \overline{\partial_{\xi}^{\alpha} \widehat{u}(\xi)}\,\partial_{\xi}^{\alpha} \widehat{Lu}(\xi)d\xi\nonumber\\
&= i \; (-1)^{|\alpha|} \int  \overline{\partial_{\xi}^{\alpha}
\widehat{u}(\xi)}\,\partial_{\xi}^{\alpha}
(\,h(\xi)\widehat{u}(\xi)\,)d\xi\nonumber
\\
&= i \; (-1)^{|\alpha|} \int  \overline{\partial_{\xi}^{\alpha}
\widehat{u}(\xi)}\,\sum_{\eta \le
\alpha}\binom{\alpha}{\eta}(\partial^{\eta}
\xi^{\beta})\,(\partial_{\xi}^{\alpha-\eta}\widehat{u}(\xi)\,)
d\xi.
\end{align}
One observes that, when $\eta=(0,\dots,0):={\bf 0}$ in
\eqref{persxaver8}, we obtain
$$i \; (-1)^{|\alpha|} \int
|\partial_{\xi}^{\alpha} \widehat{u}(\xi)|^2 d\xi
$$
and thus this term in \eqref{persxaver} is equal to zero. We
conclude from \eqref{persistencia3}, \eqref{persxaver7} and
\eqref{persxaver8} that
\begin{align}\label{perscxaver12}
2\textrm{Re}\int &|x|^{2r}\, \overline{u}\, Lu\, dx \nonumber\\
&= 2 \textrm{Re}\,i \; (-1)^{|\alpha|} \sum_{|\alpha|=r}\binom{r}{\alpha} \int  \overline{\partial_{\xi}^{\alpha} \widehat{u}(\xi)}\,\sum_{\stackrel{\eta \le \alpha}{\eta \neq {\bf 0}}}\binom{\alpha}{\eta}(\partial^{\eta} \xi^{\beta})\,(\partial_{\xi}^{\alpha-\eta}\widehat{u}(\xi)\,) d\xi\nonumber\\
& \le \sum_{|\alpha|=r}\binom{r}{\alpha} \int  \left(|\partial_{\xi}^{\alpha} \widehat{u}(\xi)|^2 +C_r \sum_{\stackrel{\eta \le \alpha}{\eta \neq {\bf 0}}}\binom{\alpha}{\eta}^2\left|\partial^{\eta} \xi^{\beta}\,\partial_{\xi}^{\alpha-\eta}\widehat{u}(\xi)\,\right|^2 \right) d\xi\nonumber\\
& \le \int |x|^{2r} |u(x)|^2
dx+C_r\sum_{|\alpha|=r}\binom{r}{\alpha} \sum_{\stackrel{\eta \le
\alpha}{\eta \neq {\bf 0}}}\binom{\alpha}{\eta}^2\int
\left|\partial^{\eta}
\xi^{\beta}\,\partial_{\xi}^{\alpha-\eta}\widehat{u}(\xi)\,\right|^2
d\xi.
\end{align}
In order to estimate the second term in \eqref{perscxaver12}, we
consider a multi-index $\eta \le \alpha$, $\eta \neq {\bf 0}$, and
the expression $J(\alpha,\beta,\eta)= \|\partial^{\eta}
\xi^{\beta}\,\partial_{\xi}^{\alpha-\eta}\widehat{u}(\xi)\|_{L_\xi^2}$.
Then, for ${\bf 0} \neq \eta \le  \alpha$, $\eta \le \beta$,
using \eqref{persistencia1},
Plancherel equality, and Leibniz formula, it follows that
\begin{align}
J(\alpha,\beta,\eta)&=\|(\partial^{\eta} \xi^{\beta})\,\partial_{\xi}^{\alpha-\eta}\widehat{u}(\xi)\|_{L_{\xi}^2}=\dfrac{\beta!}{(\beta-\eta)!}\| \xi^{\beta-\eta}\,\partial_{\xi}^{\alpha-\eta}\widehat{u}(\xi)\|_{L_{\xi}^2}\nonumber\\
&=\dfrac{\beta!}{(\beta-\eta)!}\| \xi^{\beta-\eta}\,\widehat{(x^{\alpha-\eta}u)}(\xi)\|_{L_{\xi}^2}\nonumber\\
&=\dfrac{\beta!}{(\beta-\eta)!}\| \partial_x^{\beta-\eta}\,(x^{\alpha-\eta}u)\|_{L_x^2}\nonumber\\
& \le\dfrac{\beta!}{(\beta-\eta)!}\sum_{\nu\le\beta-\eta
}\binom{\beta-\eta}{\nu}\|
(\partial^{\nu}x^{\alpha-\eta})\,(\partial_x^{\beta-\eta-\nu}u)\|_{L_x^2}.
\end{align}
Now, we proceed to estimate $\|
(\partial^{\nu}x^{\alpha-\eta})\,(\partial_x^{\beta-\eta-\nu}u)\|_{L_x^2}$.
We know that, the function $\partial^{\nu}x^{\alpha-\eta}\neq 0$
if $\nu \le \alpha -\eta$ and zero otherwise. Thus we suppose that
$\nu \le \alpha-\eta$, $\nu \le  \beta -\eta$, and since $\eta
\neq {\bf 0}$, we have
$$r_0=|\alpha -\eta-\nu|=|\alpha|-|\eta|-|\nu| =r-|\eta|-|\nu|<r, $$
and
 $$r_1=|\beta -\eta-\nu|=|\beta|-|\eta|-|\nu| <|\beta|. $$
Therefore, applying Lemma \ref{multindex} we obtain
 \begin{align}\label{finalc}
\| (\partial^{\nu}x^{\alpha-\eta})\,(\partial_x^{\beta-\eta-\nu}u)\|_{L_x^2}&=\dfrac{(\alpha-\eta)!}{(\alpha-\eta-\nu)!}\| x^{\alpha-\eta-\nu}\,\partial_x^{\beta-\eta-\nu}u\|_{L_x^2}\nonumber\\
&\le \dfrac{(\alpha-\eta)!}{(\alpha-\eta-\nu)!}\| |x|^{r_0}\,\partial_x^{\beta-\eta-\nu}u\|_{L_x^2}\nonumber\\
&\le
C_{\alpha,\eta,\nu}\left( \| |x|^{r_0}\,D_x^{r_1}u\|_{L_x^2}+\|
|x|^{r_0}u\|_{L_x^2}\right).
\end{align}
We observe that, $\eta \neq {\bf 0}$ implies $|\eta|+|\nu| \ge 1$
and this inequality implies
$$
1-\dfrac{|\eta|+|\nu|}{r} \le
1-\dfrac{|\beta|-|\eta|-|\nu|}{(|\beta|-1)r}.
$$
Now we choose $\theta$ such that $ 1-\dfrac{|\eta|+|\nu|}{r} \le
\theta \le 1-\dfrac{|\beta|-|\eta|-|\nu|}{(|\beta|-1)r} $, it
follows that $\theta \in [0,1] $. Thus applying the Intermediate
Value Theorem, there exists $ b \in [0, r]$ and there exists $ a
\in [0, (|\beta|-1)r]$ such that $r_0= \theta b$ and
$r_1=(1-\theta)a$. Using Lemma \ref{opera-l} and the interpolation
\eqref{interpolx}, we obtain
\begin{align*}
\|
(\partial^{\nu}x^{\alpha-\eta})\,(\partial_x^{\beta-\eta-\nu}u)\|_{L_x^2}
\le C \| u\|_{L^2(d\mu_r)}^{\theta}
\|u\|_{H^{(|\beta|-1)r}}^{1-\theta}+C \|x^r u\|_{L_x^2}^{(1-
\kappa_0)}\|u\|_{L^2}^{\kappa_0},
\end{align*}
where $\kappa_0=r_0/r$, and this concludes the proof of the
theorem.
\end{proof}
\begin{remark}
\label{RCF}
i) One observes that, for the generalized KdV
equation, we have
$a(r)=2r$ and for the nonlinear Schr\"odinger equation $a(r)=r$.\\
ii) In order to obtain the estimate \eqref{ap-4} for the Cauchy
problem \eqref{IVPgeral}, we multiply \eqref{IVPgeral} by
$|x|^{2r} \overline{u}$, take the real part and integrating we
have
\begin{align}\label{persxaverw}
0=\partial_t \int |x|^{2r}|u|^2 dx+ 2 \textrm{Re}\, \int (x
\cdot\overline{x})^r \overline{u} \,Lu dx + 2 \textrm{Re}\,
\int\left( |x|^{r} \overline{u}\right) |x|^{r} F(u,\nabla_x u) dx.
\end{align}
Then, by Theorem \ref{teorlinearpers} we only need to estimate the
third term in \eqref{persxaverw} (for the non-linear Schr\"odinger
this term is zero). Using Cauchy-Schwartz inequality
\begin{align}
2 \textrm{Re}\, \int\left( |x|^{r} \overline{u}\right) |x|^{r}
F(u,\nabla_x u) dx \le 2 \| |x|^{r}
\overline{u}\|_{L_x^2}\||x|^{r} F(u,\nabla_x u)\|_{L_x^2}.
\end{align}
Thus we need an estimate of the following form
\begin{align}
\||x|^{r} F(u,\nabla_x u)\|_{L_x^2}\le C  \| |x|^{r}
\overline{u}\|_{L_x^2} \; A(\|u\|_{H^{a(r)}}).
\end{align}
and it is possible if for example $F(x,y)=x \, G(x,y)$, where $G$
is a polynomial function and $a(r)>n/2+1$, in order to use
immersion of $u$ and $\nabla_x u$ in $L_x^{\infty}$ and therefore
$|G(u,\nabla_x u)| \le  A(\|u\|_{H^{a(r)}})$.
\end{remark}

\bigskip
\section{The Generalized Interpolation Lemma} \label{CL}

In this section we generalize the Abstract Interpolation Lemma
established by the authors in \cite{CN1}. In fact, we extend in
two directions: First, we generalize to multi-dimensional setting.
The second extension is concerned with the exponent $\theta$ of
the weight.

\medskip
Let $s> n/2$, $r \geq 1$ be fixed. For each $T>0$, we consider a
family $\clg{A}$ of functions $f$ from $[-T,T]$ in $H^s(\R^n)$,
satisfying the following conditions:

\bigskip
$(C1)$ The measure $\, \clg{L}^n\big( \{\xi \in \R^n; f(t,\xi)
\neq 0 \}\big)$ is positive, where $\mathcal{L}^n \big(E\big)$ is
the Lebesgue measure of a measurable set $E \subset \R^n$.

\medskip
 $(C2)$ There exist positive constants $C_0, \td{C_0}$ and a function $A_0 \geq 0$,
 which do not depend on $f$ and $t$, such that
\begin{eqnarray}
   \label{AIL0}
   \|f(t)\|^2_{L^2} \leq C_0 \; \|f(0)\|^2_{L^2},
%
\\[5pt]
  \label{AIL1}
   \|f(t)\|^2_{L^2(d\dot{\mu}_r)} \leq \td{C_0} \,
   \|f(0)\|^2_{L^2(d\dot{\mu}_r)} + A_0(\|f(0)\|_{{H}^{{a(r)}}}).
\end{eqnarray}

$(C3)$ For all $\theta \in [0,r]$, there exists $\Theta>0$, which
does not depend on $f$ and $t$, and $\gamma_1 \in (0,1/2)$, such
that
\begin{equation}
\label{AILC3}
    \int_{\{|f(t)|^2 < \Theta\}} |f(t)|^2 \; d\dot{\mu}_\theta
    \leq \gamma_1 \int_{\R^n} |f(t)|^2 \;
    d\dot{\mu}_\theta.
\end{equation}

\medskip
$(C4)$ There exist $R>0$ and $\gamma_2 \in (0,1)$ (both
independent of $f$), such that
\begin{equation}
\label{AILC4}
    \int_{\R^n \setminus B(0,R)} |f(0)|^2 \; d\dot{\mu}_r
    \leq \gamma_2 \int_{\R^n} |f(0)|^2 \;
    d\dot{\mu}_r.
\end{equation}

Clearly the set $\clg{A}$ depend on the constants $C_0$,
$\td{C_0}$, $R$, $\gamma_2$, and also the functions $A_0$,
$\Theta(\theta)$.
In the following, we present two different families, which satisfy
the conditions $(C1)$--$(C4)$. The former example, is a non
enumerable set of functions, which are not necessarily solutions
of a partial differential equation. On the other hand, the
elements of the family in the second example are solutions of the
dispersive equation \eqref{IVPgeral}.

\begin{example}
Let $R_0,T>0$, $r \ge 1$  be constants and $b>0$, such that, for
each $\theta \in [0,r]$,
\begin{equation}
\label{eq12} \int_{\{R_0 \le |\xi|\le R_0+b\}} |\xi|^{2
\theta}d\xi \le \dfrac{1}{3(T+1)^2}\int_{\{ |\xi|\le R_0\}}
|\xi|^{2 \theta}d\xi.
\end{equation}
Let $\clg{B}_0$ be the set of continuous functions in $\R^n$, such
that
$$
 g(\xi)=\begin{cases} 0, \quad \textrm{if}\,\, |\xi|> R_0+b, \\
 L, \quad \textrm{if}\,\, |\xi| \le R_0,
\end{cases}
$$
and $0\le g(\xi) \le L$, where $L$ is any positive real number,
fixed. Now, we set
 $$
    \clg{B}_1= \{f(t,\xi)= g(\xi)(1+|t|); t \in [-T,T], g \in \clg{B}_0 \}.
 $$
Then, the family $\clg{B}_1$ satisfies the $(C1)$--$(C4)$
conditions. Indeed, condition $(C1)$ is clearly satisfied. The
condition $(C2)$ is satisfied with $C_0=\tilde{C_0}=1+T$. The
condition $(C4)$ is satisfied with $R=R_0+b$ for all $\gamma_2 \in
(0,1)$, since the first integral in \eqref{AILC4} is null. And the
condition $(C3)$ is satisfied with $\Theta=L^2$ and
$\gamma_1=1/3$, since \eqref{eq12} implies
 \begin{align*}
 \int_{\{ |f(t)|^2< L^2\}} |\xi|^{2 \theta}|f(t,\xi)|^2 d\xi \le &
 \,(1+T)^2 L^2  \int_{\{ R_0 \le |\xi|\le R_0+b \}} |\xi|^{2 \theta}d\xi\\
 \le &\, \dfrac{(1+T)^2 L^2  }{3(1+T)^2}\int_{\{ |\xi|\le R_0 \}} |\xi|^{2 \theta} d\xi\\
 = & \, \dfrac13 \int_{\{ |\xi|\le R_0 \}} |\xi|^{2 \theta} |g(\xi)|^2d\xi\\
 \le & \, \dfrac13 \int_{\{ |\xi|\le R_0 \}} |\xi|^{2 \theta} |f(t,\xi)|^2d\xi\\
 \le & \, \dfrac13 \int_{ \R^n } |\xi|^{2 \theta}
 |f(t,\xi)|^2d\xi.
 \end{align*}
\end{example}

We remark that, the following example will be used in the proofs
of Theorems \ref{teomain} and \ref{teomain1}.

\begin{example}
\label{EXAMPLETWO} We consider the evolution equation
\eqref{IVPgeral} under the conditions on Remark \ref{RCF}.
We assume that,
\begin{equation}
\label{u0} u_0(x) \in \mathcal{X}^{s,r}, \quad u_0 \neq 0.
\end{equation}
Now, let $(u_0^{k})$ be a sequence of regular functions $($in
$S(\R^n)$ or in some $\mathcal{X}^{s_0,\theta}$, with $s\ll s_0)$,
such that
\begin{equation}
\label{converg1p}
 u_0^{k}
\rightarrow u_0 \quad \textrm{in}\,\, \mathcal{X}^{s,\theta}, \quad \textrm{when}\,\, k \to \infty.
\end{equation}
If the IVP \eqref{IVPgeral} satisfies the conditions \eqref{ap-1}--\eqref{ap-4},
and it is well-posed in 
$C([-T,T];H^s) $, then the set of solutions
\begin{equation}
  \mathcal{C}= (u^{k}(t)), \quad \big(k >N_0, \; \textrm{for some}
\,\,N_0>0 \big),
\end{equation}
of the IVP \eqref{IVPgeral} with initial data $u_0^{k}$, satisfies
the conditions $(C1)$--$(C4)$. Indeed, we have the following:

\medskip
{\underline{ Condition $(C1)$}}

We prove this by contradiction. First, we suppose that
$$
  \forall N_0, \,\,\exists k \ge N_0,\,\, \exists t \in [-T,T]; \,\,\, \clg{L}^n\big( \{x \in \R^n; u^{k}(t,x) \neq
0 \}\big)=0.
$$
Then, there exist $k_m \ge m$, $m=1,2\cdots ,$ and $t_m\in
[-T,T]$, such that
\begin{equation}
\label{zero}
   u^{k_m}(t_m,x)=0, \,\, x - a.e.
\end{equation}
By \eqref{converg1p}, \eqref{zero} and the continuous dependence
of the initial data, the sequence of solutions $u^{k}(t)$,
associated to IVP \eqref{IVPgeral} and initial data $u_0^{k}$,
satisfy
\be\label{converg11pers} \|
 u(t_m)\|_{H^s} = \|u^{k_m}(t_m)
 -u(t_m)\|_{H^s} \le \sup_{t \in [-T,T]} \|u^{k_m}(t)
 -u(t)\|_{H^s} \stackrel{m \to \infty}{\rightarrow} 0.
 \ee

As $u \in C([-T,T];H^s)$, $t_m \in [-T,T]$, by compactness we can
assume that, $t_m \to t_0 \in [-T,T]$. Thus $\|
 u(t_m)\|_{H^s} \to \|
 u(t_0)\|_{H^s} $  and by \eqref{converg11pers}, it follows that $ \|
 u(t_0)\|_{H^s} =0$, which implies
$$u(t_0, x)=0, \quad x- a.e.$$
By uniqueness of solutions, we have for any $t \in [-T,T]$,
$u(t,x)=0$ almost everywhere. In particular, $u(0,x)= u_0=0$,
which is to say a contradiction with \eqref{u0}.

\medskip
{\underline{ Condition $(C2)$}}

It is a direct consequence given from the fact that, the solution
$u$ of the IVP \eqref{IVPgeral} satisfies the conditions
\eqref{ap-1} and \eqref{ap-4}.

\medskip
{\underline{ Condition $(C3)$}}

We must be prove that
$$
  \forall \theta \in [0,r],\,\, \exists \Theta>0, \,\,
  \textrm{s.t.}\,\, \forall k > N_0,\,\,
  \forall t \in [-T,T] \,\, \textrm{and for some} \,\, \gamma_1 \in
  (0,1/2),
$$
we have
\begin{equation}
\label{AILC3E}
    \int_{\{|u^{k}(t)|^2 < \Theta\}} |u^{k}(t)|^2 \; d\dot{\mu}_\theta
    \leq \gamma_1 \int_{\R^n} |u^{k}(t)|^2 \;
    d\dot{\mu}_\theta.
\end{equation}
Again, we prove this condition by contradiction. We suppose that
$$
\exists \theta \in [0,r],\,\, \forall \Theta>0, \,\, \exists k > N_0,\,\, \exists t \in [-T,T] \,\, \textrm{and} \,\, \forall \gamma_1 \in (0,1/2)
$$
and we have
\begin{equation*}
    \int_{\{|u^{k}(t)|^2 < \Theta\}} |u^{k}(t)|^2 \; d\dot{\mu}_\theta
    > \gamma_1 \int_{\R^n} |u^{k}(t)|^2 \;
    d\dot{\mu}_\theta.
\end{equation*}
Then, there exist $k_j >N_0$, $t_j \in [-T,T]$, $j \in Z^+$ and
$\gamma_0 \in (0,1/2)$, such that
\begin{equation}
\label{AILC3pers}
    \int_{\{|u^{k_j}(t_j)|^2 < 1/j\}} |u^{k_j}(t_j)|^2 \; d\dot{\mu}_\theta
    > \gamma_0 \int_{\R^n} |u^{k_j}(t_j)|^2 \;
    d\dot{\mu}_\theta.
\end{equation}
Now, without lost of generality, we can suppose that
\be \label{convert0} t_j \to t_0 \in [-T,T], \quad
\textrm{when}\,\, j \to \infty.
\ee
Further, we consider the map
\be\label{aplic}\Phi :\Z^+ \to \clg{V}=\{k_j;\, j \in \Z^+\},
\quad\Phi(j)=k_j, \ee
 and the following cases:
\\
{\underline{ Case I, the set $\clg{V}$ is not finite}}: In this
case we can suppose that $k_j \to\infty$, when $j \to \infty$. By
the immersion ($s>n/2$), \eqref{converg1p} and continuous
dependence of the initial data, the subsequence of solutions
$u^{k_j}(t_j)$, associated to IVP \eqref{IVPgeral} and initial
data $u_0^{k_j}$, satisfy
\begin{align}\label{converg1pers}
|u^{k_j}&(t_j,x)- u(t_0,x)| \le C \|u^{k_j}(t_j)
 -u(t_j)\|_{H^s} +C \|u(t_j)
 -u(t_0)\|_{H^s}\nonumber\\
&\le \sup_{t \in [-T,T]} \|u^{k_j}(t)
 -u(t)\|_{H^s}+C \|u(t_j)
 -u(t_0)\|_{H^s} \stackrel{j \to \infty}{\longrightarrow} 0,
 \end{align}
 where we have used that $u \in C([-T,T];H^s)$.

Then, using \eqref{converg1pers}, the Dominated Convergence
Theorem and \eqref{AILC3pers}, we obtain a contradiction.
\\
{\underline{ Case II, the set $\clg{V}$ is finite}}: In this case,
concerning the application $\Phi$, there exists $k_q \in \clg{V}$,
such that, $\clg{V}_0:= \Phi^{-1}\{k_q\}=\{ q_1,q_2, \cdots \}
\subseteq \Z^+$ must be not finite, with $q_{j}<q_{j+1} $, for
each $j \in \Z^+$. Therefore, by \eqref{AILC3pers} we get
\begin{equation}
\label{AILC3pers111}
    \int_{\{|u^{k_q}(t_{q_j})|^2 < 1/q_j\}} |u^{k_q}(t_{q_j})|^2 \; d\dot{\mu}_\theta
    > \gamma_0 \int_{\R^n} |u^{k_q}(t_{q_j})|^2 \;
    d\dot{\mu}_\theta.
\end{equation}
If $j \to \infty$, then $q_j \to \infty$ and by \eqref{convert0}
$t_{q_j} \to t_0$. As $u^{k_q} \in C([-T,T];H^s)$, by the
immersion, we have for any $x\in \R^n$
\be
 |u^{k_q}(t_{q_j},x)
 -u^{k_q}(t_0,x)| \le C \|u^{k_q}(t_{q_j})
 -u^{k_q}(t_0)\|_{H^s} \stackrel{j \to \infty}{\longrightarrow} 0.
\ee
Therefore, arguing as previous in Case I and taking the limit in
\eqref{AILC3pers111}, we obtain a contradiction.

\medskip
{\underline{ Condition $(C4)$}}

We prove that: There exist $R>0$ and $\gamma_2 \in (0,1)$, such
that for any $k>N_0$
\begin{equation*}
    \int_{\R^n \setminus B(0,R)} |u^{k}(0,x)|^2 \;|x|^{2r} dx
    \leq \gamma_2 \int_{\R^n} |u^{k}(0,x)|^2 \;
    \;|x|^{2r} dx.
\end{equation*}
Again by contradiction, we suppose that
$$
\forall R>0, \,\, \forall \gamma_2 \in (0,1),\,\, \exists
k>N_0,\,\,\textrm{such that}
$$
\begin{equation}
\label{AILC4pers}
    \int_{\{|x|>R\}} |u^{k}(0,x)|^2 \;|x|^{2r} dx
    >\gamma_2 \int_{\R^n} |u^{k}(0,x)|^2 \;
    \;|x|^{2r} dx.
\end{equation}
In particular, this proposition implies for any $m \in \Z^+$,
there exists $k_m>N_0$, such that
\begin{equation}
\label{AILC4pers1}
    \int_{\{|x|>m\}} |u_0^{k_m}(x)|^2 \;|x|^{2r} dx
    >\left(1-\frac1m\right) \int_{\R^n} |u_0^{k_m}(x)|^2 \;
    \;|x|^{2r} dx.
\end{equation}
Let us consider the map \be\label{aplic}\Gamma :\Z^+ \to
\clg{W}=\{k_m;\, m \in \Z^+\}, \quad\Gamma(m)=k_m, \ee
 and the following cases:
\\
{\underline{ Case I, the set $\clg{W}$ is not finite}}: In this
case we can suppose that $k_m \to\infty$, when $m \to \infty$ and
thus by \eqref{converg1p}, we obtain
\be\label{persx0}
\int_{\{|x|>m\}} |u_0^{k_m}(x) - u_0(x)|^2 \;
    \;|x|^{2r} dx \le \int_{\R^n} |u_0^{k_m}(x) - u_0(x)|^2 \;
    \;|x|^{2r} dx \stackrel{m \to \infty}{\rightarrow} 0
\ee
and
\be\label{persx1}
\int_{\R^n} |u_0^{k_m}(x)|^2 \;
    \;|x|^{2r} dx \stackrel{m \to \infty}{\rightarrow} \int_{\R^n} |u_0(x)|^2 \;
    \;|x|^{2r} dx.
\ee Moreover, as
\begin{align}\label{persx2}
 \int_{\{|x|>m\}} |u_0^{k_m}(x)|^2 \;|x|^{2r} dx  &\le \int_{\{|x|>m\}} |u_0^{k_m}(x) - u_0(x)|^2 \;
    \;|x|^{2r} dx \nonumber\\
& +\int_{\{|x|>m\}} |u_0(x)|^2 \;
    \;|x|^{2r} dx \stackrel{m \to \infty}{\longrightarrow} 0,
\end{align}
from \eqref{AILC4pers1}--\eqref{persx2}, we arrive to a
contradiction.
\\
{\underline{ Case II, the set $\clg{W}$ is finite}}: In this case,
again concerning the application $\Gamma$, there exists $k_p \in
\clg{W}$, such that $\clg{W}_0:= \Gamma^{-1}\{k_p\}=\{ p_1,p_2,
\cdots \} \subseteq \Z^+$ is not finite, with $p_{i}<p_{i+1} $, $i
\in \Z^+$. Therefore, by \eqref{AILC4pers1} we get
\begin{equation}\label{persisxav3}
\int_{\{|x|>p_m\}} |u_0^{k_p}(x)|^2 \;|x|^{2r} dx
    >\left(1-\frac{1}{p_m}\right) \int_{\R^n} |u_0^{k_p}(x)|^2 \;
    \;|x|^{2r} dx.
\end{equation}
Similarly to Case I before, taking the limit in \eqref{persisxav3}
when $m \to \infty$  ($p_m \to \infty$), we obtain a
contradiction.
\end{example}

Now we pass to the Generalized Abstract Interpolation Lemma.
\begin{lemma}\label{G1CL}
Let $r \geq 1$ be a real number, and $\clg{A}$ a family satisfying
the conditions $(C1)$--$(C4)$. Then, for each $\theta \in (0,r)$,
there exists a positive constant $\rho(\theta,r)$, such that, for
each $t \in [-T,T]$,
\be \label{AIL}
    \|f(t)\|^2_{L^2(d\dot{\mu}_\theta)} \leq \|f(t)\|^{2 \rho}_{H^s} \;
    \Big( K_0 \, \|f(0)\|^2_{L^2}
    + K_1 \, \|f(0)\|^2_{L^2(d\dot{\mu}_\theta)}
    + K_2 \Big)
\ee
for all $f \in \clg{A}$, where
$$
  K_0= C_0 \,  R^{2 \theta} \, \left(\frac{4}{\Theta}\right)^{\rho +
  1},\quad
  K_1= \frac{\tilde{C}_0}{\rho (1-\gamma_2)} \,
  \left(\frac{4}{\Theta}\right)^{\rho}, \quad
  K_2= \frac{A_0(\|f(0)\|_{{H}^{{a(r)}}})}{\rho R^{2 \theta \rho}}.
$$
\end{lemma}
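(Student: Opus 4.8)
The plan is to bound the left-hand side by first throwing away the region where $f(t)$ is small, and then interpolating between the weight $|x|^{2\theta}$, the plain $L^2$ norm and the heavier weight $|x|^{2r}$, using $(C2)$ and $(C4)$ to transfer everything back to time $0$. First I would invoke $(C3)$: since $\gamma_1<1/2$, splitting $\int_{\R^n}|f(t)|^2\,d\dot{\mu}_\theta$ over $\{|f(t)|^2<\Theta\}$ and $\{|f(t)|^2\ge\Theta\}$ and absorbing the small part gives
$$\|f(t)\|^2_{L^2(d\dot{\mu}_\theta)}\le\frac{1}{1-\gamma_1}\int_{\{|f(t)|^2\ge\Theta\}}|f(t)|^2\,d\dot{\mu}_\theta\le 2\int_{\{|f(t)|^2\ge\Theta\}}|f(t)|^2\,d\dot{\mu}_\theta,$$
so it suffices to estimate the integral over the super-level set. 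Condition $(C1)$ enters here only to guarantee that the family is nondegenerate, so that the super-level set carries genuine mass and the constants below are meaningful.

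The factor $\|f(t)\|_{H^s}^{2\rho}$ is manufactured on the super-level set: there one has $1\le(|f(t,x)|^2/\Theta)^{\rho}$, and since $s>n/2$ the Sobolev embedding $H^s\hookrightarrow L^\infty$ yields $|f(t,x)|^{2\rho}\le C\|f(t)\|_{H^s}^{2\rho}$. To make this extraction productive rather than circular, I would carry it out only after splitting the spatial domain as $\R^n=B(0,R)\cup(\R^n\setminus B(0,R))$, with $R$ the radius supplied by $(C4)$, so that on each piece the surviving $|f(t)|^2\,d\dot{\mu}_\theta$ integral is replaced by something strictly simpler than the one we started with.

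On $B(0,R)$ one uses $|x|^{2\theta}\le R^{2\theta}$, bounds the leftover by $\|f(t)\|_{L^2}^2$, and applies \eqref{AIL0} to reach $C_0\|f(0)\|_{L^2}^2$; this produces the $K_0$ term. On the exterior $|x|>R$ one writes $|x|^{2\theta}=|x|^{2r}|x|^{-2(r-\theta)}\le R^{-2(r-\theta)}|x|^{2r}$, where $\theta<r$ is essential, so the leftover is controlled by $R^{-2(r-\theta)}\|f(t)\|^2_{L^2(d\dot{\mu}_r)}$; then \eqref{AIL1} passes this to time $0$, giving $\tilde C_0R^{-2(r-\theta)}\|f(0)\|^2_{L^2(d\dot{\mu}_r)}$ plus the residual $A_0(\|f(0)\|_{H^{a(r)}})R^{-2(r-\theta)}$. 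Finally $(C4)$ is used in the rearranged form
$$\|f(0)\|^2_{L^2(d\dot{\mu}_r)}\le\frac{1}{1-\gamma_2}\int_{B(0,R)}|f(0)|^2\,d\dot{\mu}_r\le\frac{R^{2(r-\theta)}}{1-\gamma_2}\|f(0)\|^2_{L^2(d\dot{\mu}_\theta)},$$
whose power of $R$ exactly cancels the preceding $R^{-2(r-\theta)}$ and leaves the $K_1$ term with its factor $1/(1-\gamma_2)$. Choosing $\rho=\rho(\theta,r)$ so that $2\theta\rho=2(r-\theta)$, that is $\rho=(r-\theta)/\theta>0$ for $\theta\in(0,r)$, aligns the power of $R$ in the residual term with the advertised $R^{-2\theta\rho}$ in $K_2$.

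The \emph{main obstacle} is this last combined step: one must thread the single factor $\|f(t)\|_{H^s}^{2\rho}$ simultaneously onto all three pieces while arranging that the loss $R^{-2(r-\theta)}$ incurred on the exterior region is precisely compensated by the gain $R^{2(r-\theta)}$ coming out of $(C4)$, so that the final constants depend on $R$ only through the two monomials $R^{2\theta}$ and $R^{-2\theta\rho}$. The remaining discrepancies in the explicit form of the constants, namely the powers of $4/\Theta$ and the $1/\rho$ factors in $K_1$ and $K_2$, are pure bookkeeping arising from the bound $1/(1-\gamma_1)\le2$, the Sobolev embedding constant, and the specific choice of $\rho$.
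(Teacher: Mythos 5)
Your proof is correct, but it follows a genuinely different and more elementary route than the paper's. The paper first uses $(C3)$ to reduce $I=\|f(t)\|^2_{L^2(d\dot{\mu}_\theta)}$ to the quantity $I_1^{\kappa_1}-\kappa_0 I_2^{\kappa_1}$ (with $\kappa_1=\Theta$), rewrites that difference by a layer-cake formula as $2\theta\int_0^\infty \eta^{2\theta-1}\clg{L}^n(E(\eta))\,d\eta$ for suitable level sets $E(\eta)$, bounds $\clg{L}^n(E(\eta))$ by a Chebyshev-type estimate $\int_{\{\|\xi\|<\eta/\kappa\}}|f|^{2r/\theta}\|\xi\|^{2r}\eta^{-2r}\,d\xi$, extracts $\|f(t)\|_{H^s}^{(2r/\theta)-2}$ by Sobolev embedding, and then splits the $\eta$-integral at $N_1=R$, using \eqref{AIL0} on $[0,N_1]$ and a claim derived from \eqref{AIL1} and $(C4)$ on $[N_1,\infty)$, finishing with a Fubini swap that converges precisely because $\theta<r$. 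You replace the entire layer-cake machinery by a direct split of physical space at the radius $R$ of $(C4)$, using the elementary monotonicity $\|\xi\|^{2\theta}\le R^{2\theta}$ inside and $\|\xi\|^{2\theta}\le R^{-2(r-\theta)}\|\xi\|^{2r}$ outside, with the $R^{\pm 2(r-\theta)}$ powers cancelling through the rearranged $(C4)$; the factor $\|f(t)\|_{H^s}^{2\rho}$ is then manufactured by inserting $1\le(|f|^2/\Theta)^\rho$ on the super-level set (nonempty by $(C1)$ together with $(C3)$) and Sobolev embedding. Both arguments land on the same exponent $\rho=(r-\theta)/\theta$ and the same qualitative dependence of $K_0,K_1,K_2$ on $C_0,\td{C_0},R,\Theta,\gamma_2,A_0$; your constants differ from the advertised ones only in bookkeeping, which is immaterial for the applications (and the paper's own proof does not actually assemble its final constants either). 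A bonus of your route worth noting: it shows the inequality already holds \emph{without} the prefactor $\|f(t)\|_{H^s}^{2\rho}$, i.e.\ that factor is inserted only to match the stated form, whereas in the paper's argument it arises unavoidably from the Chebyshev step.
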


\begin{proof}
For simplicity, we write $f(t,\xi) \equiv f(\xi)$ and $f(0,\xi)
\equiv f_0(\xi)$. Let $\kappa_j>0$, $(j=0,1)$, be constants
independents of $t$, and for $\theta \in [0,r]$, we set
$$
\begin{aligned}
I_1^{\kappa_1}:=& \int_{\R^n} \|\xi\|^{2 \theta} \, |f(\xi)|^2 \,
\chi_{\{|f(\xi)|^2>\kappa_1\}} \; d\xi,
\\
I_2^{\kappa_1}:=& \, \kappa_1 \int_{\R^n} \|\xi\|^{2 \theta} \,
\chi_{\{|f(\xi)|^2> \kappa_1\}} \; d\xi,
\\
I_3^{\kappa_1}:=& \int_{\R^n} \|\xi\|^{2 \theta} \, |f(\xi)|^2 \,
\chi_{\{|f(\xi)|^2 \leq \kappa_1\}} \; d\xi,
\end{aligned}
$$
where $\chi_E$ is the characteristic function of the set $E$.
Then, we have
$$
  I:= \int_{\RR^n} \|\xi\|^{2 \theta} \,
  |f(\xi)|^2  \; d\xi = I_1^{\kappa_1} + I_3^{\kappa_1} =
  I_1^{\kappa_1} - \kappa_0 I_2^{\kappa_1} + I_3^{\kappa_1} +
  \kappa_0 I_2^{\kappa_1}.
$$
It is not difficult to show that $I_2^{\kappa_1} <
I_1^{\kappa_1}$,
hence $\kappa_0 I_2^{\kappa_1} <
\kappa_0 \, (I_1^{\kappa_1}+I_3^{\kappa_1}) = \kappa_0 \, I$.
Consequently, we have
\begin{equation}
\label{I11}
      (1 - \kappa_0) \, I < I - \kappa_0 I_2^{\kappa_1} =
      I_1^{\kappa_1} - \kappa_0 I_2^{\kappa_1}+ I_3^{\kappa_1}.
\end{equation}

Now, we show that, there exist $\theta_1>0$ independent of $f$, $t
\in [-T,T]$, and a positive constant $\beta <1$, such that
$I_3^{\kappa_1} < \beta I_1^{\kappa_1}$. Indeed, we have
$$
  \begin{aligned}
  \int_{\R^n} \|\xi\|^{2 \theta} \, |f(\xi)|^2 \,
  \chi_{\{|f|^2 \leq \kappa_1\}} \; d\xi
  &\leq \beta \,
  \int_{\R^n} \|\xi\|^{2 \theta} \,
  |f(\xi)|^2 \, \chi_{\{|f|^2 > \kappa_1\}} \; d\xi
\\
  &= \beta \,
  \int_{\R^n} \|\xi\|^{2 \theta} \,
|f(\xi)|^2 \; d\xi
\\
  &- \beta \,
  \int_{\R^n} \|\xi\|^{2 \theta} \,
|f(t,\xi)|^2 \, \chi_{\{|f|^2 \leq \kappa_1\}} \; d\xi,
\end{aligned}
$$
and hence, we must have
$$
  \begin{aligned}
    \int_{\R^n} \|\xi\|^{2 \theta} \, |f(\xi)|^2 \,
  \chi_{\{|f|^2 \leq \kappa_1\}} \; d\xi
  \leq \frac{\beta}{1 + \beta} \; \int_{\R^n} \|\xi\|^{2 \theta} \,
|f(\xi)|^2 \; d\xi,
\end{aligned}
$$
which is satisfied since $f \in \clg{A}$. Consequently, we take
$\kappa_1= \Theta$ in inequality \eqref{AILC3}. One observes that,
since $\beta <1$, it follows that $\beta/(1+\beta) < 1/2$. It
follows that, there exists a positive constant $\alpha < 1/2$,
such that
\begin{equation} \label{I12}
    I_3^{\kappa_1} < \alpha (I_1^{\kappa_1} + I_3^{\kappa_1}) = \alpha I.
\end{equation}
Hence we fix $\kappa_0 = (3/4 - \alpha)>1/4$ and, from
\eqref{I11}, \eqref{I12}, we obtain
\begin{equation} \label{I13} I < \frac{I_1^{\kappa_1} - \kappa_0
I_2^{\kappa_1}}{1 - (\kappa_0 + \alpha)}=4\left(I_1^{\kappa_1} -
\kappa_0 I_2^{\kappa_1}\right).
\end{equation}

At this point, we claim that, there exist $N_1 \in \NN$ and a
constant $C_1>0$ both independent of $f$ and $t$, such that, for
all $\eta \geq N_1$
$$
  \int_{\{ \|\xi\| < \eta\}} |f(\xi)|^2 \|\xi\|^{2r} \; d\xi \leq C_1
  \int_{\{ \|\xi\| < \eta\}} |f_0(\xi)|^2 \|\xi\|^{2r} \; d\xi+\td{C_1}.
$$
In order to prove the claim, we show that
$$
  \begin{aligned}
  \int_{\R^n} |f(\xi)|^2 \|\xi\|^{2r} \; d\xi &- \int_{\{ \|\xi\| \geq \eta\}} |f(\xi)|^2 \|\xi\|^{2r} \; d\xi
  \\
  &\leq C_1 \int_{\R^n} |f_0(\xi)|^2 \|\xi\|^{2r} \; d\xi -
  C_1 \int_{\{\|\xi\| \geq \eta\}} |f_0(\xi)|^2 \|\xi\|^{2r} \;
  d\xi+\td{C_1},
  \end{aligned}
$$
for each $\eta \geq N_1$. Therefore, from \eqref{AIL1} and
supposing $C_1> \td{C_0}$, it is enough to show that
$$
  \begin{aligned}
  \td{C_1} &+ \td{C_0} \int_\R |f_0(\xi)|^2 |\xi|^{2r} \; d\xi -
  \int_{\{|\xi| \geq \eta\}} |f(\xi)|^2 |\xi|^{2r} \; d\xi
  \\
  &\leq C_1 \int_\R |f_0(\xi)|^2 |\xi|^{2r} \; d\xi -
  C_1 \int_{\{|\xi| \geq \eta\}} |f_0(\xi)|^2 |\xi|^{2r} \;
  d\xi + \td{C_1}.
  \end{aligned}
$$
By a simple algebraic manipulation, it is sufficient to show that
$$
  \int_{\{\|\xi\| \geq \eta\}} |f_0(\xi)|^2 \|\xi\|^{2r} \;
  d\xi \leq \frac{C_1-\td{C_0}}{C_1} \int_{\R^n} |f_0(\xi)|^2 \|\xi\|^{2r} \;
  d\xi,
$$
which is true for each $f \in \clg{A}$, and we take $N_1= R$ of
inequality \eqref{AILC4}.

\bigskip
Now, we proceed to estimate $I_1^{\kappa_1} - \kappa_0
I_2^{\kappa_1}$. If $\theta \in \{0,r\}$, then by \eqref{AIL0} and
\eqref{AIL1}, it is obvious that
$$
I_1^{\kappa_1} - \kappa_0 I_2^{\kappa_1} \le C_0 \int_{\R^n}
\|\xi\|^{2 \theta} \, |f_0(\xi)|^{2} \; d\xi.
$$
Then, we consider in the following $\theta \in (0,r)$. Denoting
$\kappa= (\kappa_0 \kappa_1)^{1/2 \theta}$, it follows that
$$
\begin{aligned}
      I_1^{\kappa_1} - \kappa_0 I_2^{\kappa_1}& = \int_{\R^n} \big(\|\xi\|^{2 \theta} \,
      |f(\xi)|^2 - \kappa^{2 \theta} \, \|\xi\|^{2 \theta} \big) \,
\chi_{\{|f(\xi)|^2> \kappa_1\}} \; d\xi
\\
      &= \int_{\R^n} \big(\,(\, \|\xi\| \,
      |f(\xi)|^{1/\theta} \,)^{2 \theta} - \kappa \, \|\xi\| \,)^{2 \theta} \big) \,
\chi_{\{|f(\xi)|^2> \kappa_1\}} \; d\xi \\
      & = \int_{\R^n} \int_{\kappa \,
      \|\xi\|}^{\|\xi\| \, |f(\xi)|^{1/\theta}}
      \varphi'(\eta) \,  d\eta \; d\xi
\\
      &= 2 \theta \int_0^\infty \eta^{2 \theta-1} \; \mathcal{L}^n\big(E(\eta)\big) \,
      d\eta,
\end{aligned}
$$
where for each $\eta>0$, $\varphi(\eta) = \eta^{2 \theta}$ and
$$
  E(\eta):= \big\{ \xi \in \RR^n \,/\,
  |f(\xi)|^{1/\theta} \|\xi\| > \eta \, \big\} \bigcap
  \big\{ \xi \in \RR^n \,/\, \, \kappa \, \|\xi\| < \eta  \big\}.
$$
One observes that, for each $\eta>0$, $E(\eta) \not= \emptyset$
(in the geometric measure sense). Indeed, assume for contrary that
$E(\eta)= \emptyset$, then $\clg{L}^n\big(E(\eta)\big)= 0$ and
thus $I<0$ from \eqref{I13}, which is a contradiction by condition
$(C1)$, and the definition of $I$. Moreover, we observe that,
since
$$
  1 < \frac{|f(\xi)|^{2r/\theta} \, \|\xi\|^{2r}}{\eta^{2r}},
$$
we could write
$$
\begin{aligned}
  \mathcal{L}^n\big( E(\eta) \big) &\leq \int_{\{\|\xi\| < \eta/\kappa\}}
  \frac{|f(\xi)|^{2r/\theta} \|\xi\|^{2r}}{\eta^{2r}}
  \; d\xi.
\end{aligned}
$$
%
%
%
Therefore, we have
$$
\begin{aligned}
      I_1^{\kappa_1} - \kappa_0 I_2^{\kappa_1}& \leq  2 \theta
      \int_0^\infty \eta^{2 \theta-1}
      \int_{\{\, \|\xi\| < \eta / \kappa \}} \frac{|f(\xi)|^{2r/\theta} \, \|\xi\|^{2r}}{\eta^{2r}} \; d\xi \, d\eta
\\[7pt]
      &\leq 2 \theta \; \|f(t)\|_{H^s}^{(2r/\theta) - 2} \int_0^\infty \eta^{2 \theta - 2r - 1}
      \int_{\{\, \|\xi\| < \eta / \kappa \}} |f(\xi)|^{2} \, \|\xi\|^{2r}  \; d\xi \,
      d\eta,
\end{aligned}
$$
where we have used the Sobolev Embedding Theorem. Hence applying
\eqref{AIL0}, we obtain
$$
\begin{aligned}
      I_1^{\kappa_1} &- \kappa_0 I_2^{\kappa_1} \le 2 \theta \; \|f(t)\|_{H^s}^{(2r/\theta) - 2}
      \int_0^{N_1} \eta^{2 \theta-2r-1}
      \int_{\{\, \|\xi\| < \eta/\kappa \}} |f(\xi)|^{2} \, \frac{\eta^{2r}}{\kappa^{2r}} \;  \; d\xi \,
      d\eta
\\[5pt]
     &+ 2 \theta \; \|f(t)\|_{H^s}^{(2r/\theta) - 2}
      \int_{N_1}^\infty \eta^{2 \theta - 2r - 1}
      \int_{\{\, \|\xi\| < \eta/\kappa\}} |f(\xi)|^{2} \, \|\xi\|^{2r} \;  \; d\xi \,
      d\eta
\\[5pt]
      &\leq \frac{C_0 \;N_1^{2 \theta}}{\kappa^{2r}} \; \|f(t)\|_{H^s}^{(2r/\theta) - 2} \,
      \int_{\R^n} |f_0(\xi)|^{2} \; d\xi
\\[5pt]
      &+ 2 \theta \; C_1 \,\|f(t)\|_{H^s}^{(2r/\theta) - 2}  \,
      \int_{\R^n} |f_0(\xi)|^{2} \, \|\xi\|^{2r}
      \int_{\{ \eta > \kappa \, \|\xi\| \}} \eta^{2 \theta - 2r -1}  \;  \; d\eta \,
      d\xi + \Xi
\\[5pt]
      &\leq C_0 \left(\frac{\;4 }{\kappa_1}\right)^{r/\theta} \; N_1^{2\theta} \;
      \|f(t)\|_{H^s}^{(2r/\theta) - 2} \,
      \int_{\R^n} |f_0(\xi)|^{2} \; d\xi+
\\[5pt]
      &+ C_1 \left(\frac{4}{\kappa_1}\right)^{(r-\theta)/\theta}\, \frac{\theta}{r - \theta}
      \, \|f(t)\|_{H^s}^{(2r/\theta) - 2} \,
      \int_{\R^n} |f_0(\xi)|^{2}
      \; \|\xi\|^{2\theta} \,
      d\xi + \Xi,
\end{aligned}
$$
where
$$
  \Xi= \td{C_1} \frac{\theta}{r-\theta} \frac{\|f(t)\|_{H^s}^{(2r/\theta) - 2}}{N_1^{2(r-\theta)}}.
$$
\end{proof}
%
%

\section{ Statement of the well-posedness result}

This is the section where we prove the well-posedness of the
Cauchy problem \eqref{IVP} in weighted Sobolev space
$\clg{X}^{s,\theta}$, for $s \geq 2r$ and $\theta \in [0,r]$.

\bigskip
\subsection{Proof of Theorems \ref{teomain} and \ref{teomain1}}

\begin{proof}[{\bf Proof of Theorem \ref{teomain}}]

Consider $r \geq 1$, $u_0 \in \mathcal{X}^{s,\theta}$, $s \ge 2
r$, $\theta \in [0,r]$, with $u_0 \neq 0$. We know that there
exists an function $u \in C([-T,T], H^{s})$, such that the IVP
\eqref{IVP} is global well-posed in $H^{s}$. It is well known that
$\mathbf{S}(\R)$ is dense in $\mathcal{X}^{s,\theta}$. Therefore,
for $u_0 \in \mathcal{X}^{s,\theta}$ there exist a sequence
$(u_0^{\lambda})$ in $\mathbf{S}(\R)$ such that \be\label{converg}
u_0^{\lambda} \to u_0 \quad \textrm{in}\,\,
\mathcal{X}^{s,\theta}. \ee
By continuous dependence, the sequence of solutions
$u^{\lambda}(t)$ associated to IVP \eqref{IVP} and with initial
data $u_0^{\lambda}$ satisfy
\be\label{converg1}
 \sup_{t \in [-T,T]} \|u^{\lambda}(t)
 -u(t)\|_{H^s} \stackrel{\lambda \to \infty}{\rightarrow} 0.
 \ee
Now, assuming conditions \eqref{AC}, suppose temporarily that the
solutions $u^{\lambda}$ of the IVP
\begin{equation}\label{IVPmnx}
\begin{cases}
\p_t u^\lambda + a(u^\lambda) \partial_x u^\lambda + \partial_{x}^3 u^\lambda= 0, \quad (t,x) \in \R^2,\\
u^\lambda(x,0)  =  u_0^\lambda(x),
\end{cases}
\end{equation}
satisfy the conditions $(C1)$--$(C4)$ of Section \ref{CL}.
Therefore Lemma \ref{G1CL} gives
\begin{equation*}
\int_\RR |\xi|^{2 \theta} |u^{\lambda}(t,\xi)|^2 \, d\xi \leq C \;
\big(\int_\RR |u^{\lambda}(0,\xi)|^2 \,
  d\xi + \int_\RR |\xi|^{2 \theta} |u^{\lambda}(0,\xi)|^2 \,  d\xi + 1 \big),
\end{equation*}
where $C=  C
(\theta,\|u^{\lambda}(t)\|_{H^s},\|u^{\lambda}(0)\|_{L^2},\|u^{\lambda}_x(0)\|_{L^2},
\|u^{\lambda}_{xx}(0)\|_{L^2},T)$, 
taking the
limit when $\lambda \to \infty$, \eqref{converg1} implies
\begin{equation*}
\int_\RR |\xi|^{2 \theta} |u(t,\xi)|^2 \, d\xi \leq C \;
\big(\int_\RR |u(0,\xi)|^2 \,
  d\xi + \int_\RR |\xi|^{2 \theta} |u(0,\xi)|^2 \,  d\xi + 1 \big),
\end{equation*}
where $C=  C
(\theta,\|u(t)\|_{H^s},\|u(0)\|_{L^2},\|u_x(0)\|_{L^2},\|u_{xx}(0)\|_{L^2},T)$.
Thus $u(t) \in \mathcal{X}^{s,\theta}$, $t \in [-T,T]$, which
proves the persistence. The local well-posedness theory in $H^{s}$
implies the uniqueness, thanks for that, we obtain uniqueness in
$\mathcal{X}^{s,\theta}$.

Finally, following the proof in Example \ref{EXAMPLETWO}, we prove
that the sequence of solutions $(u^{\lambda_n}(t))$ satisfy the
conditions (C1)--(C4).
\end{proof}

\begin{proof}[{\bf Proof of Theorem \ref{teomain1}}]

By Theorem \ref{teomain} is sufficient prove continuous dependence
in the norm $\|\cdot\|_{L^2(d\dot{\mu}_{\theta})}$. Let $u(t)$ and
$v(t)$ be two solutions in $\mathcal{X}^{s,\theta}$, of the IVP
\eqref{IVP-1} with initial dates $u_0$ and $v_0$ respectively, let
$u^{\lambda}(t)$, $v^{\lambda}(t)$ be the solutions of the IVP
\eqref{IVP-1} with initial dates $u_0^{\lambda}$ and
$v_0^{\lambda}$ respectively such that $u_{0}^{\lambda},
v_{0}^{\lambda} \in \mathbf{S}(\R)$, $u_{0}^{\lambda} \to u_0$,
$v_{0}^{\lambda} \to v_0$ in $\mathcal{X}^{s,\theta}$ and with
$\lambda >> 1$, we have
\begin{align*}
\|u(t)-v(t)\|_{L^2(d\dot{\mu}_{\theta})}\le & \|u(t)-u^{\lambda}(t)\|_{L^2(d\dot{\mu}_{\theta})}+\|u^{\lambda}(t)-v^{\lambda}(t)\|_{L^2(d\dot{\mu}_{\theta})}\\
&+\|v^{\lambda}(t)-v(t)\|_{L^2(d\dot{\mu}_{\theta})}.
\end{align*}
 Convergence in \eqref{converg1}
 implies for $\lambda >>1$ that
 $$|u(x,t)-u^{\lambda}(x,t)|\le 2 |u(x,t)| \quad \textrm{and} \quad \quad |v(x,t)-v^{\lambda}(x,t)|\le 2 |v(x,t)|,$$
and the Dominated Convergence Lebesgue's Theorem gives
 \begin{align*}\|u(t)-u^{\lambda}(t)\|_{L^2(d\dot{\mu}_{\theta})}\to 0 \quad \textrm{and} \quad\|v^{\lambda}(t)-v(t)\|_{L^2(d\dot{\mu}_{\theta})}\to 0.
\end{align*}
Let $w^\lambda:=u^{\lambda}-v^{\lambda}$, then $w^\lambda$
satisfies the equation
\begin{align*}
w^\lambda_t &+
w^\lambda_{xxx}+(u^{\lambda})^{k}w^\lambda_{x}+v^{\lambda}_{x}A(u^{\lambda}, u^{\lambda}){w}^\lambda_{x}=0,
\end{align*}
where $A(x,y)=x^{k-1}+x^{k-2}y+\cdots +x y^{k-1}+y^{k-1}$.

Then, we multiply the above equation by $\bar{w}^\lambda$,
integrate on $\R$ and take two times the real part, to obtain
$$
  \partial_t \int_\R |w^\lambda(t,x)|^2 \, dx \leq
  h(\|u_0\|_{H^2},\|v_0\|_{H^2}) \,
  \int_\R |w^\lambda(t,x)|^2 \, dx,
$$
where $h$ is a polynomial function with $h(0,0)= 0$ and we have used \eqref{ap-1}-\eqref{ap-4} and convergence \eqref{converg}. Therefore, by
Gronwall's Lema, we have
$$
  \|w^\lambda(t)\|_{L^2} \leq \exp\big(T \,
  h(\|u_0\|_{H^2},\|v_0\|_{H^2})\big) \, \|w^\lambda_{0}\|_{L^2},
$$
which gives the continuous dependence in case $\theta= 0$.
Moreover, when $\theta=r$ with an analogously argument as used in
the proof of Theorem \ref{kato} in Section \ref{IN}.

$$
  \begin{aligned}
  \|w^\lambda(t)\|_{L^{2}(d\dot{\mu}_r)} \leq \exp& \big(T \,
  h_1(\|u_0\|_{H^{2r}},\|v_0\|_{H^{2r}})\big)
  \\
  &\times \,
  \Big(\|w^\lambda_{0}\|_{L^{2}(d\dot{\mu}_r)} + h_1(\|u_0\|_{H^{2r}},\|v_0\|_{H^{2r}}) \Big),
  \end{aligned}
$$
where $h_1$ is a continuous function with $h_1(0,0)=0$.

Consequently, applying the Abstract Interpolation Lemma, we obtain
the continuous dependence for $\theta \in (0,r)$, where we have
assumed that the family $(w^\lambda)$ satisfies the hypothesis of
the Abstract Interpolation Lemma. Indeed, these properties for the
family $(w^\lambda)$ are demonstrated in a similar way done in the
proof of Theorem \ref{teomain}.
\end{proof}

\section*{Acknowledgements}

The authors were partially supported by FAPERJ through the grant
E-26/ 111.564/2008 entitled {\sl ``Analysis, Geometry and
Applications''}, and Pronex-FAPERJ through the grant E-26/ 110.560/2010 entitled \textsl{%
"Nonlinear Partial Differential Equations"}. The first author is
also partially supported by the National Counsel of Technological
and Scientific Development (CNPq)by the grant 303849/2008-8.

\newcommand{\auth}{\textsc}


\end{document}